\numberwithin{equation}{section}
\newcommand{\OneB}[1]{{\color{red}{#1}}}
\newcommand{\red}[1]{\textcolor[rgb]{1.00,0.00,0.00}{#1}}
\newcommand{\green}[1]{\textcolor[rgb]{0.00,0.65,0.00}{#1}}
\newcommand{\blue}[1]{\textcolor[rgb]{0.00,0.00,1.00}{#1}}
\theoremstyle{plain}
\newtheorem{lem}{Lemma}[section]
\newtheorem{thm}[lem]{Theorem}
\theoremstyle{definition}
\newtheorem{defn}{Definition}[section]
\theoremstyle{remark}
\newtheorem{rem}{Remark}[section]
\renewcommand{\theequation}{\thesection.\arabic{equation}}
\renewcommand{\thefigure}{\thesection.\arabic{figure}}
\newcommand{\sst}{\scriptstyle}
\newcommand{\intl}{\int\limits}
\newcommand{\ovl}{\overline}
\newcommand{\n}{\noindent}
\newcommand{\ds}{\displaystyle}
\newcommand{\vp}{\varepsilon}
\newcommand{\wt}{\widetilde}
\newcommand{\ms}{\medskip}
\begin{document}
	\renewcommand{\figurename}{Figure}
	\renewcommand{\thesubfigure}{(\alph{subfigure})}
	\renewcommand{\thesubtable}{(\alph{subtable})}
	\makeatletter
	\renewcommand{\p@subfigure}{\thefigure~}
	\bibliographystyle{plainnat}
	\makeatother
	\title{\large\bf Inverse Random Source Problem for the Helmholtz Equation from Statistical Phaseless Data}
	\author{
		Qiao-Ping Chen\thanks
		{School of Mathematics, Hunan University, Changsha 410082, China.
			Email: s230600375@hnu.edu.cn}
\and
Hongyu Liu\thanks{Department of Mathematics, City University of Hong Kong, Kowloon, Hong Kong, China.\ \ Email: hongyu.liuip@gmail.com; hongyliu@cityu.edu.hk}
		\and
		Zejun Sun\thanks
		{School of Mathematics, Hunan University, Changsha 410082, China. Email: sunzejun@hnu.edu.cn}
		\and
		Li-Li Wang\thanks
		{School of Mathematics, Hunan University, Changsha 410082, China.
			Email: lilywang@hnu.edu.cn}
		\and
		Guang-Hui Zheng\thanks
		{School of Mathematics, Hunan University, Changsha 410082, China. Email: zhenggh2012@hnu.edu.cn; zhgh1980@163.com}
	}
	\date{}
	\maketitle
	\begin{center}{\bf ABSTRACT}
	\end{center}\smallskip
	This paper investigates the problem of reconstructing a random source from statistical phaseless data for the two-dimensional Helmholtz equation. The major challenge of this problem is non-uniqueness, which we overcome through a reference source technique. Firstly, we introduce some artificially added point sources into the inverse random source system and derive phase retrieval (PR) formulas for the expectation and variance of the radiated fields. This paper rigorously analyze the uniqueness and stability of the recovered statistics of the radiated fields. Afterwards, since the direct problem has a unique mild solution, by examining the expectation and variance of this solution and combined with the phase retrieval formulas, we derive the Fredholm integral equations to solve the inverse random source problem (IRSP). We prove the stability of the corresponding integral equations. To quantify the uncertainty of the random source, we utilize the Bayesian method to reconstruct the random source and establish the well-posedness of the posterior distribution. Finally, numerical experiments demonstrate the effectiveness of the proposed method and validate the theoretical results.
	
	\smallskip
	{\bf keywords}: Stochastic Helmholtz equations; Inverse random source problem; Phase retrieval;  Bayesian inference
	\section{Introduction}
	The inverse source problem, which involves reconstructing an unknown source from measurements of the radiated fields, has broad applications in scientific and engineering areas, such as acoustic tomography \textcolor{blue}{\cite{Liu2017,JLi2009}}, seismic exploration \textcolor{blue}{\cite{Weglein2003,Bube1983}}, medical imaging \textcolor{blue}{\cite{Albanese2006,Deng2017}}, and antenna design \textcolor{blue}{\cite{Palmeri2018,Xiao2021}}. Existing studies of the inverse source problem mostly consider that the source is a deterministic function \textcolor{blue}{\cite{ElBadia2000,Bleistein1977,Zhang2015}}. However, in many practical situations, the source is better modeled as a random field. Compared to deterministic inverse source problems, inverse random source problems (IRSP) involve inherent randomness and uncertainties, which make the problem more difficult. In this case, the solution of the forward problem is a random function. The statistics of the solution, such as expectation, variance and higher-order moments, play a more significant role. Recent studies have proposed various theoretical and numerical methods for addressing IRSP. In particular, the IRSP associated with the Helmholtz equation or time-harmonic acoustic model is addressed in this paper, which refers to \textcolor{blue}{\cite{Bao2014,Bao2016,Caro2019,Devaney1979,Lassas2008,Li2020,Liliu2021,Li2019,Li2021}}.
%

	In existing research, most studies on reconstructing sources rely on complete datasets, including both intensity and phase information. However, acquiring the full data is often unfeasible or even impossible. Typically, only the intensity or modulus of the field can be measured, espectially in high-frequency regime. This limitation has motivated the research for inverse source problems with phaseless data \textcolor{blue}{\cite{Chang2024,Zhang2018Retri,Ji2019}}. Xu et al. \textcolor{blue}{\cite{Xu2018}} is concerned with uniqueness in inverse acoustic scattering with phaseless far-field data at a fixed frequency.  O. Ivanyshyn and R. Kress \textcolor{blue}{\cite{Ivanyshyn2011}} propose a modification of the Newton-type algorithm to recover a real-valued surface impedance from phase-less far field data. Inverse scattering problems using phaseless data have been widely examined by many researchers \textcolor{blue}{\cite{Ammari2016,Klibanov2014a,Klibanov2017a,Klibanov,ZZ2017,zz2018}}.
	
	Considering IRSP when only phaseless data is available, the problem becomes even more challenging. The reconstruction process is more complex, and the theoretical analysis of uniqueness and stability encounters greater difficulties. It is precisely for this reason that related results are nearly absent.
	
	This paper is concerned with the problem of reconstructing a random acoustic source in the two-dimensional Helmholtz equation from multi-frequency phaseless data. More precisely, consider the scattering problem of the Helmholtz equation in a homogeneous medium
	\begin{equation}\label{Helmholtz}
		\left\{
		\begin{aligned}
			\Delta u + k^2 u &= f \quad \text{in} \ \mathbb{R}^2,  \\
			\frac{\partial u}{\partial r} - \mathrm{i}k u &= o(r^{-1/2}), \quad r = |x| \to \infty,
		\end{aligned}
		\right.
	\end{equation}
	where the wavenumber $k>0$ is a constant, $f$ is the electric current density, and $u$ is the radiated random wave field that satisfies the Sommerfeld radiation condition.
	Here, \( f \) is assumed to be a random function driven by an additive noise, given by
	\begin{equation}
		f(x) = g(x) + \sigma(x) \dot{W}_x,
	\end{equation}
	where \( g \) and \( \sigma \geq 0 \) are two deterministic real functions with compact supports contained in the rectangular domain \( \Omega_0 \subset \mathbb{R}^2 \), and \( \dot{W}_x \) represents the white noise. In this random source model, the mean, standard deviation, and variance of $f$ are $g$, $\sigma$, and $\sigma^2$, respectively. We assume that $g\in L^2(\Omega_0)$ and $\sigma\in L^4(\Omega_0)$. Then, there exists a unique continuous stochastic process $u$ satisfying
	\begin{equation}\label{solve}
		u(x,k)=\int_{\Omega_0}G_k(x,y)g(y)\:\mathrm{d}y+
		\int_{\Omega_0}G_k(x,y)\sigma(y)\:\mathrm{d}W_y,
	\end{equation}
	which is referred to as the mild solution of the stochastic scattering problem \textcolor{blue}{\eqref{Helmholtz}} [\textcolor{blue}{\citealp{Bao2016}}, Theorem 2.7].
	Here  $G_k$ is Green's function of the Helmholtz equation. Specifically, we have
	$$G_k(x,y):=-\frac{\mathrm{i}}4H_0^{(1)}(k|x-y|),$$
	where $H_{0}^{(1)}$ is the Hankel function of the first kind of order zero.
	
	In the following, the notation \( u(x,k) \) is used to indicate the explicit dependence of the radiated field \( u(x) \)  on the wavenumber \( k \).
Suppose that the radiated fields are measured on \( \partial B_R := \{x \in \mathbb{R}^2 : |x| = R\} \), with \( \Omega_0 \subset\subset B_R := \{x \in \mathbb{R}^2 : |x| < R\} \). Then the IRSP considered in this paper is described as follows:
\\

\textbf{(IRSP)}\ \ Reconstruct the source \( f(x) \) from the multi-frequency phaseless statistics data \( \{\mathbf{E}(|u(x,k)|^2);\\ \mathbf{Var}(|u(x,k)|^2)\}: x \in \partial B_R,\, k \in \mathbb{K}_N \} \), where \( N \in \mathbb{N}_+ \) and \( \mathbb{K}_N \) is an admissible set consisting of a finite number of wavenumbers.
\\

	This problem is challenging due to its inherent non-uniqueness. Specifically, the source term $f(x)$ and its negative counterpart $-f(x)$ generate identical phaseless data, leading to ambiguity in source recovery.
	
	To address this issue, motivated by reference scatterer techniques that are applied to inverse scattering problems \textcolor{blue} {\cite{JLi2009,Novikov2015,Novikov2015b,Zhang2018Uni,Zhang2018Retri}}, we develop a reference source technique by using some artificially added point sources as additional reference sources to retrieve the phase information. By adding these point sources, we derive two systems of equations, one system for recovering the expectation and the other for recovering the variance and the covariance. The point sources are placed at suitable locations, which guarantees that the absolute value of the determinant of the coefficient matrix for each of the two systems of equations has a strictly positive lower bound. Thereby, the two systems are ensured to be uniquely solvable, leading to the stability results for the phase information of both the mean and variance of the radiated fields on $\partial B_R$.
	
	Once the phase retrieval formulas are obtained, the problem becomes a standard IRSP. The inverse problem is to reconstruct the random source by utilizing the measured phaseless statistical information together with the phase retrieval formulas. Given the random source, the direct problem is to determine the random wave field. The authors have demonstrated that a unique mild solution exists for the stochastic direct scattering problem in \textcolor{blue}{\cite{Bao2016}}. By analyzing the statistics of the mild solution and incorporating the phase retrieval formulas, we derive Fredholm integral equations to solve the IRSP. We have established the stability results for the phaseless IRSP. To quantify the uncertainty of the random source, we employ the Bayesian method to address the IRSP and have shown the well-posedness of the posterior distribution.

	The major contributions of this work can be summarized as follows:
	\begin{itemize}
		\item
		We investigate a class of inverse problems that utilize phaseless statistical data to reconstruct two-dimensional random wave sources. To date, there has been relatively little research compared to the inverse problems involving phase data. Specifically, by introducing reference point sources, we ingeniously integrate phase retrieval techniques, integral equation theory, and Bayesian inference methods to address the challenges posed by inherent ill-posedness and randomness of inverse problems. Consequently, we develop effective numerical algorithms for IRSP.
		
		\item
		We prove the stability estimates for the phase retrieval formulas and the corresponding integral equations, and we establish the well-posedness of the posterior measure within the Bayesian inference framework. This provides a theoretical foundation and guidance for the subsequent design of effective reconstruction algorithms.
		
		\item
		Based on phase retrieval techniques, integral equation theory, and Bayesian inference methods, we have constructed several reconstruction formulas for random wave sources within the Bayesian framework, relying on different artificial point sources. Finally, we employed a dimension-independent Preconditioned Crank-Nicolson (PCN) sampling algorithm to validate the effectiveness and robustness of these reconstruction formulas against measurement errors.

	\end{itemize}
	
	The outline of this paper is as follows. In Section \textcolor{blue}{\ref{2}}, we introduce the reference source technique and derive phase retrieval formulas for the mean and variance of the radiated fields. We also prove the unique solvability of the phase retrieval formulas and the stability of the recovered statistics. Section \textcolor{blue}{\ref{3}} is dedicated to the stochastic inverse problem, where we derive Fredholm integral equations. The stability of integral equations are established. In Section \textcolor{blue}{\ref{4}}, we
	apply the Bayesian method to solve the inverse problem and demonstrate the well-posedness of the posterior distribution. Section \textcolor{blue}{\ref{5}} verifies the phase retrieval formulas through numerical experiments, and evaluates the performance of the Bayesian method in reconstructing the random source. Finally, conclusions are provided in Section \textcolor{blue}{\ref{6}}.
	
	\section{Phase retrieval formulas}\label{2}
	In this section, we mainly introduce how to derive two systems of equations for the phase retrieval formulas, one for the expectation and the other for the variance and the covariance, by incorporating artificially added point sources.
	
	Compared to the literature \textcolor{blue}{\cite{Zhang2018Retri}}, which uses the reference source technique to invert a deterministic source from phaseless data, our study discusses the IRSP with phaseless data.
	Some necessary notation and definitions are introduced. Without loss of generality, let
	$$\Omega_0 = (-a,a)^2, \quad a>0.$$
	We introduce the definition of an admissible set, which is slightly modified from  \textcolor{blue}{\cite{Zhang2018Retri}}.
	\begin{defn}
		Let $N\in\mathbb{N}_+$ and $k^*\in\mathbb{R}_+$ be a small wavenumber such that
		$0<k^*R<1.$ Then the admissible set of wavenumbers is given by
		$$\mathbb{K}_N:=\left\{\frac\pi ax, x\geqslant 1, x \in \mathbb{R}\right\}\bigcup\left\{k^*\right\}.$$
	\end{defn}
	Let $m$ be a positive integer and $\nu_s=s \pi / m$ for $s \in \mathbb{R}$. Denote
	
	$$
	\begin{aligned}
		& B_j:=\left\{r(\cos \theta, \sin \theta): 0 \leqslant r \leqslant R, \nu_{2 j-2} \leqslant \theta \leqslant \nu_{2 j}\right\}, j=1, \cdots, m, \\
		& \partial B_j:=\left\{R(\cos \theta, \sin \theta): \nu_{2 j-2} \leqslant \theta \leqslant \nu_{2j}\right\}, j=1, \cdots, m .
	\end{aligned}
	$$
	For each $j\ (j=1, \cdots, m)$, take two points $z_{j,\ell}:=\lambda_{j,\ell} R\left(\cos \nu_{2 j-1}, \sin \nu_{2 j-1}\right)$, where $\lambda_{j,\ell} \in \mathbb{R}, \sqrt{2} a / R \leqslant\left|\lambda_{j,\ell}\right|<1$ and $\ell=1,2$.
	Then, we have
	
	$$
	B_R=\bigcup_{j=1}^m B_j  \text { and }\partial B_R=\bigcup_{j=1}^m \partial B_j.
	$$
	The geometrical configuration is illustrated in Figure \textcolor{blue}{\ref{fig1}}.

	\begin{figure}[ht]
		\centering
		\includegraphics[width=0.6\linewidth]{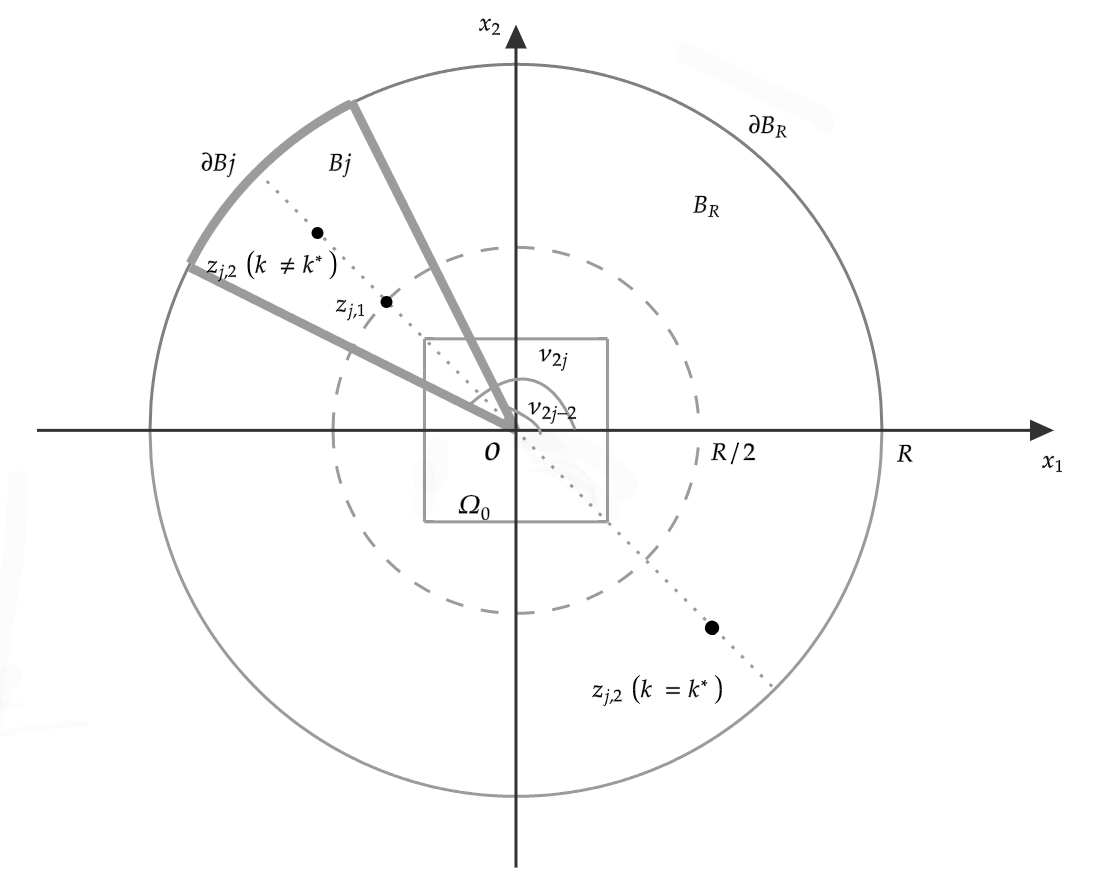}
		\captionsetup{justification=justified, labelsep=period}  
		\caption{An illustration of the geometrical setting for the reference source technique.
			The locations of $z_{j,\ell}$, $\ell$= 1, 2, are marked by small black points.}
		\label{fig1}
	\end{figure}
	
	Let $\delta_{j,\ell}(j=1, \cdots, m, \ell=1,2)$ denote the Dirac distributions at the points $z_{j,\ell}$ and define $\|\cdot\|_{j,\infty}:=\|\cdot\|_{L^\infty(\partial B_j)}.$
	$G_k(x,z_{j,\ell})$ satisfies the following inhomogeneous Helmholtz equation
	$$\Delta G_{j,\ell}+k^2 G _{j,\ell}=\delta_{j,\ell}\quad\mathrm{~in~}\mathbb{R}^2.$$
	Since $H_0^{(1)}(z)=J_0(z)+\mathrm{i} Y_0(z)$ for $z \in \mathbb{C} \backslash\{0\}$, we have
	
	$$
	\operatorname{Re}\left(G_{j, \ell}(x, k)\right)=\frac{1}{4} Y_0\left(k r_{j, \ell}\right), \quad \operatorname{Im}\left(G_{j,\ell}(x, k)\right)=-\frac{1}{4} J_0\left(k r_{j, \ell}\right) ,
	$$
	where $r_{j,\ell}=|x-z_{j,\ell}|$, $J_0$ and $Y_0$ are the Bessel functions of the first and second kind with order zero, respectively. Further, $v_{j,\ell}:=u+G_{j,\ell}$ is the unique solution to the problem
	$$\begin{cases}\Delta v_{j,\ell}+k^2v_{j,\ell}=f+\delta_{j,\ell},&\quad\text{in }\mathbb{R}^2,\\\frac{\partial v_{j,\ell}}{\partial r}-\text{i}kv_{j,\ell}=o\left(r^{-1/2}\right),&\quad r=|x|\to\infty.\end{cases}$$
For simplicity, we denote
	$$
	u_{j,k}(\cdot):=\left.u(\cdot, k)\right|_{\partial B_j,} \quad v_{j, \ell,k}(\cdot):=\left.v_{j, \ell}(\cdot, k)\right|_{\partial B_j,} \quad \ell=1,2,
	$$
	for each fixed $j=1, \cdots, m$ and $k \in \mathbb{K}_{\mathrm{N}}$.
	With these preparations, we consider the following phase retrieval problem:
\\

\textbf{(PRP)}\ \ Given \( N \in \mathbb{N}_+, k \in \mathbb{K}_N \) and the phaseless statistical data

$\{\mathbf{E}^\epsilon(|u(x,k)|^2)$, $\mathbf{Var}^\epsilon(|u(x,k)|^2)\}$, $x \in \partial B_R$;

$\{\mathbf{E}^\epsilon(|v_{j,1,k}|^2)$, $\mathbf{E}^\epsilon(|v_{j,2,k}|^2)$, $\mathbf{Cov}^\epsilon(|v_{j,1,k}|^2, |v_{j,2,k}|^2)$, $\mathbf{Cov}^\epsilon(|u_{j,k}|^2, |v_{j,1,k}|^2)$, $\mathbf{Cov}^\epsilon(|u_{j,k}|^2,|v_{j,2,k}|^2)$, $\mathbf{Var}^\epsilon(|v_{j,1,k}|^2)$, $\mathbf{Var}^\epsilon(|v_{j,2,k}|^2)\}$ for $j=1,\cdots,m$;
	
Recover the mean of the radiated fields $\mathbf{E}(u_{j,k})$, the covariance $\mathbf{Cov}(\operatorname{Re}(u_{j,k}), \operatorname{Im}(u_{j,k}))$, and the variance: $\mathbf{Var}(\operatorname{Re}(u_{j,k}))$, $\mathbf{Var}(\operatorname{Im}(u_{j,k}))$ for $j=1,\cdots,m$ 	
\\

Now, we turn to the derivation of the phase retrieval formulas. Notice
	$$
	\operatorname{Re}\left(v_{j, \ell,k}\right)=\operatorname{Re}(u_{j,k})+\frac{1}{4} Y_0\left(k r_{j, \ell}\right), \quad \operatorname{Im}\left(v_{j,\ell,k}\right)=\operatorname{Im}(u_{j,k})-\frac{1}{4} J_0\left(k r_{j, \ell}\right) .
	$$
	Therefore, we have
	\begin{equation}\label{v_jlk}
		\begin{aligned}
			|v_{j,\ell,k}|^2 &=(\operatorname{Re}(u_{j,k})+\frac{1}{4} Y_0\left(k r_{j, \ell}\right))^2+(\operatorname{Im}(u_{j,k})-\frac{1}{4} J_0(k r_{j, \ell}))^2 \\
			&=(\operatorname{Re}(u_{j,k}))^2+(\operatorname{Im}(u_{j,k}))^2+\frac{1}{2}Y_0(kr_{j,\ell})\cdot \operatorname{Re}(u_{j,k}) -\frac{1}{2}J_0(kr_{j,\ell})\cdot \operatorname{Im}(u_{j,k})\\&\quad+\frac{1}{16}Y_0^2(kr_{j,\ell})+\frac{1}{16}J_0^2(kr_{j,\ell})\\
			&=|u_{j,k}|^2+\frac{1}{16}\left|H_0^{(1)}(kr_{j,\ell})\right|^2+\frac{1}{2}Y_0(kr_{j,\ell})\cdot \operatorname{Re}(u_{j,k}) -\frac{1}{2}J_0(kr_{j,\ell})\cdot\operatorname{Im}(u_{j,k}).\\
		\end{aligned}
	\end{equation}
	We have established a relationship between the phaseless and phase data, enabling us to derive the phase retrieval formulas.
	\subsection{Recovering expectation}
	In this section, we derive the phase retrieval formula for the expectation.
	Taking the expectation on both sides of \textcolor{blue}{\eqref{v_jlk}}, we have
	\begin{equation}
		\mathbf{E}(|v_{j,\ell,k}|^2) =\mathbf{E}(|u_{j,k}|^2)+\frac{1}{16}\left|H_0^{(1)}(kr_{j,\ell})\right|^2+\frac{1}{2}Y_0(kr_{j,\ell})\cdot \mathbf{E}(\operatorname{Re}(u_{j,k})) -\frac{1}{2}J_0(kr_{j,\ell})\cdot \mathbf{E}(\operatorname{Im}(u_{j,k})).\\
		\notag
	\end{equation}
	Hence,
	\begin{equation}\label{Ef}
		\begin{aligned}
			&Y_0(kr_{j,1})\cdot \mathbf{E}(\operatorname{Re}(u_{j,k})) -J_0(kr_{j,1})\cdot \mathbf{E}(\operatorname{Im}(u_{j,k})) = f_{j,1,k},\\
			&Y_0(kr_{j,2})\cdot \mathbf{E}(\operatorname{Re}(u_{j,k})) -J_0(kr_{j,2})\cdot \mathbf{E}(\operatorname{Im}(u_{j,k})) = f_{j,2,k},
		\end{aligned}
	\end{equation}
	where
	\begin{equation}\label{f}
		f_{j,\ell,k}=2\left(\mathbf{E}(|v_{j,\ell,k}|^{2})-\mathbf{E}(|u_{j,k}|^{2})\right)-\frac{1}{8}\left|H_{0}^{(1)}(kr_{j,\ell})\right|^{2},\quad\ell=1,2.
\end{equation}

Thus, by a direct calculation, the phase retrieval formula for the expectation on \(\partial B_j\) is obtained:
\begin{equation}\label{E}
	\mathbf{E}(\operatorname{Re}(u_{j,k})) = \frac{\operatorname{det}(A_{j,k}^R)}{\operatorname{det}(A_{j,k})},\quad \mathbf{E}(\operatorname{Im}(u_{j,k})) = \frac{\operatorname{det}(A_{j,k}^I)}{\operatorname{det}(A_{j,k})},
\end{equation}
where the function matrices $A_{j,k}, A_{j,k}^R, A_{j,k}^I$ are defined as follows
$$
\begin{aligned}&A_{j,k}=\begin{pmatrix}Y_0(kr_{j,1})&-J_0(kr_{j,1})\\Y_0(kr_{j,2})&-J_0(kr_{j,2})\end{pmatrix},\quad A_{j,k}^\mathrm{R}=\begin{pmatrix}f_{j,1,k}&-J_0(kr_{j,1})\\f_{j,2,k}&-J_0(kr_{j,2})\end{pmatrix},\\&A_{j,k}^\mathrm{I}=\begin{pmatrix}Y_0(kr_{j,1})&f_{j,1,k}\\Y_0(kr_{j,2})&f_{j,2,k}\end{pmatrix}.\end{aligned}$$
Therefore, the mean of the radiated fields $\mathbf{E}(u_{j,k})$ can be recovered from $\mathbf{E}(u_{j,k}) = \mathbf{E}(\operatorname{Re}(u_{j,k})) + i\mathbf{E}(\operatorname{Im}(u_{j,k}))$ for
$j = 1, \cdots, m.$

\subsection{Recovering variance and covariance}
In this section, we will derive the phase retrieval formula for the variance.

Based on \textcolor{blue}{\eqref{v_jlk}}, for each fixed $j=1, \cdots, m$ and $k \in \mathbb{K}_{\mathrm{N}}$, we have
\begin{equation}\label{covuv}
	\begin{aligned}
		&\mathbf{Cov}(|u_{j,k}|^2,|v_{j,\ell,k}|^2)\\
		=& \mathbf{Cov}(|u_{j,k}|^2,|u_{j,k}|^2+\frac{1}{2}Y_0(kr_{j,\ell})\cdot \operatorname{Re}(u_{j,k}) -\frac{1}{2}J_0(kr_{j,\ell})\cdot\operatorname{Im}(u_{j,k})) \\
		=&\mathbf{Var}(|u_{j,k}|^2)+\frac{1}{2}Y_0(kr_{j,\ell})\cdot\mathbf{Cov}(|u_{j,k}|^2,\operatorname{Re}(u_{j,k}))
		-\frac{1}{2}J_0(kr_{j,\ell})\cdot\mathbf{Cov}(|u_{j,k}|^2,\operatorname{Im}(u_{j,k})).
	\end{aligned}
\end{equation}
From \textcolor{blue}{\eqref{v_jlk}} and \textcolor{blue}{\eqref{covuv}}, we obtain
\begin{equation}\label{varv}
	\begin{aligned}
		4\mathbf{Var}(|v_{j,\ell,k}|^2) =& 4\mathbf{Var}(|u_{j,k}|^2+\frac{1}{2}Y_0(kr_{j,\ell})\cdot \operatorname{Re}(u_{j,k}) -\frac{1}{2}J_0(kr_{j,\ell})\cdot\operatorname{Im}(u_{j,k}))\\
		=&4\mathbf{Var}(|u_{j,k}|^2)+Y_0^2(kr_{j,\ell})\cdot\mathbf{Var}(\operatorname{Re}(u_{j,k}))+J_0^2(kr_{j,\ell})\cdot\mathbf{Var}(\operatorname{Im}(u_{j,k}))\\
		&+4Y_0(kr_{j,\ell})\cdot\mathbf{Cov}(|u_{j,k}|^2,\operatorname{Re}(u_{j,k}))-4 J_0(kr_{j,\ell})\cdot\mathbf{Cov}(|u_{j,k}|^2,\operatorname{Im}(u_{j,k}))\\&-2Y_0(kr_{j,\ell})\cdot
		J_0(kr_{j,\ell})\cdot\mathbf{Cov}(\operatorname{Re}(u_{j,k}),\operatorname{Im}(u_{j,k}))\\
		=&Y_0^2(kr_{j,\ell})\cdot\mathbf{Var}(\operatorname{Re}(u_{j,k}))+J_0^2(kr_{j,\ell})\cdot\mathbf{Var}(\operatorname{Im}(u_{j,k}))+8\mathbf{Cov}(|u_{j,k}|^2,|v_{j,\ell,k}|^2)\\&-4\mathbf{Var}(|u_{j,k}|^2)-2Y_0(kr_{j,\ell})\cdot
		J_0(kr_{j,\ell})\cdot\mathbf{Cov}(\operatorname{Re}(u_{j,k}),\operatorname{Im}(u_{j,k}))\\
	\end{aligned}
\end{equation}
and
\begin{equation}\label{covvv}
	\begin{aligned}
		4\mathbf{Cov}(|v_{j,1,k}|^2,|v_{j,2,k}|^2) =&4\mathbf{Cov}(|u_{j,k}|^2+\frac{1}{2}Y_0(kr_{j,1})\cdot \operatorname{Re}(u_{j,k}) -\frac{1}{2}J_0(kr_{j,1})\cdot\operatorname{Im}(u_{j,k}),\\&|u_{j,k}|^2+\frac{1}{2}Y_0(kr_{j,2})\cdot \operatorname{Re}(u_{j,k}) -\frac{1}{2}J_0(kr_{j,2})\cdot\operatorname{Im}(u_{j,k})) \\
		=&4\mathbf{Cov}(|u_{j,k}|^2,|v_{j,1,k}|^2)+4\mathbf{Cov}(|u_{j,k}|^2,|v_{j,2,k}|^2)-4\mathbf{Var}(|u_{j,k}|^2)\\&+Y_0(kr_{j,1})Y_0(kr_{j,2})\cdot\mathbf{Var}(\operatorname{Re}(u_{j,k}))+J_0(kr_{j,1})J_0(kr_{j,2})\cdot\mathbf{Var}(\operatorname{Im}(u_{j,k}))\\&-\left(J_0(kr_{j,1})Y_0(kr_{j,2})+Y_0(kr_{j,1})J_0(kr_{j,2})\right)\cdot\mathbf{Cov}(\operatorname{Re}(u_{j,k}),\operatorname{Im}(u_{j,k})).
	\end{aligned}
\end{equation}
Using equations \textcolor{blue}{\eqref{varv}} and \textcolor{blue}{\eqref{covvv}}, we derive the variance retrieval formula

\begin{equation}\label{Vf}
	\begin{aligned}
		&Y_0^2(kr_{j,1}) \cdot\mathbf{Var}(\operatorname{Re}(u_{j,k})) + J_0^2(kr_{j,1}) \cdot\mathbf{Var}(\operatorname{Im}(u_{j,k})) \\&- 2Y_0(kr_{j,1}) J_0(kr_{j,1}) \cdot\mathbf{Cov}(\operatorname{Re}(u_{j,k}), \operatorname{Im}(u_{j,k})) =F_{j,k,1,1}, \\
		&Y_0^2(kr_{j,2}) \cdot \mathbf{Var}(\operatorname{Re}(u_{j,k})) + J_0^2(kr_{j,2}) \cdot \mathbf{Var}(\operatorname{Im}(u_{j,k})) \\
		&-2Y_0(kr_{j,2}) J_0(kr_{j,2}) \cdot \mathbf{Cov}(\operatorname{Re}(u_{j,k}), \operatorname{Im}(u_{j,k})) = F_{j,k,2,2}, \\
		&Y_0(kr_{j,1}) Y_0(kr_{j,2}) \cdot \mathbf{Var}(\operatorname{Re}(u_{j,k})) + J_0(kr_{j,1}) J_0(kr_{j,2}) \cdot \mathbf{Var}(\operatorname{Im}(u_{j,k})) \\
		&-(Y_0(kr_{j,1}) J_0(kr_{j,2}) + Y_0(kr_{j,2}) J_0(kr_{j,1})) \cdot \mathbf{Cov}(\operatorname{Re}(u_{j,k}), \operatorname{Im}(u_{j,k})) = F_{j,k,1,2},
	\end{aligned}
\end{equation}
where
\begin{equation}\label{F}
	\begin{aligned}
		F_{j,k,\ell_1,\ell_2} =&4[\mathbf{Cov}(|v_{j,\ell_1,k}|^2,|v_{j,\ell_2,k}|^2)-\mathbf{Cov}(|u_{j,k}|^2,|v_{j,\ell_1,k}|^2)-\mathbf{Cov}(|u_{j,k}|^2,|v_{j,\ell_2,k}|^2)\\&+\mathbf{Var}(|u_{j,k}|^2)].
	\end{aligned}
\end{equation}
Thus, by a simple calculation, the variance and covariance of real and imaginary parts on
$\partial B_j$ is given by
\begin{equation}\label{Var}
	\mathbf{Var}(\operatorname{Re}(u_{j,k})) = \frac{\operatorname{det}(D_{j,k}^{(1)})}{\operatorname{det}(D_{j,k})},\quad \mathbf{Var}(\operatorname{Im}(u_{j,k})) = \frac{\operatorname{det}(D_{j,k}^{(2)})}{\operatorname{det}(D_{j,k})},\quad\mathbf{Cov}(\operatorname{Re}(u_{j,k}),\operatorname{Im}(u_{j,k})) = \frac{\operatorname{det}(D_{j,k}^{(3)})}{\operatorname{det}(D_{j,k})},
\end{equation}
where the function matrices $D_{j,k}, D_{j,k}^{(1)}, D_{j,k}^{(2)}, D_{j,k}^{(3)}$ are defined as follows
$$
\begin{aligned}D_{j,k}&=\begin{pmatrix}
		Y_0^2(kr_{j,1})&J_0^2(kr_{j,1})&-2Y_0(kr_{j,1})J_0(kr_{j,1})\\
		Y_0^2(kr_{j,2})&J_0^2(kr_{j,2})&-2Y_0(kr_{j,2})J_0(kr_{j,2})\\
		Y_0(kr_{j,1})Y_0(kr_{j,2})&J_0(kr_{j,1})J_0(kr_{j,2})&-Y_0(kr_{j,1})J_0(kr_{j,2})-Y_0(kr_{j,2})J_0(kr_{j,1})
	\end{pmatrix},\\\\
	D_{j,k}^{(1)}&=\begin{pmatrix}
		F_{j,k,1,1}&J_0^2(kr_{j,1})&-2Y_0(kr_{j,1})J_0(kr_{j,1})\\
		F_{j,k,2,2}&J_0^2(kr_{j,2})&-2Y_0(kr_{j,2})J_0(kr_{j,2})\\
		F_{j,k,1,2}&J_0(kr_{j,1})J_0(kr_{j,2})&-Y_0(kr_{j,1})J_0(kr_{j,2})-Y_0(kr_{j,2})J_0(kr_{j,1})
	\end{pmatrix},\\\\
	D_{j,k}^{(2)}&=\begin{pmatrix}
		Y_0^2(kr_{j,1})&F_{j,k,1,1}&-2Y_0(kr_{j,1})J_0(kr_{j,1})\\
		Y_0^2(kr_{j,2})&F_{j,k,2,2}&-2Y_0(kr_{j,2})J_0(kr_{j,2})\\
		Y_0(kr_{j,1})Y_0(kr_{j,2})&F_{j,k,1,2}&-Y_0(kr_{j,1})J_0(kr_{j,2})-Y_0(kr_{j,2})J_0(kr_{j,1})
	\end{pmatrix},\\\\
	D_{j,k}^{(3)}&=\begin{pmatrix}
		Y_0^2(kr_{j,1})&J_0^2(kr_{j,1})&F_{j,k,1,1}\\
		Y_0^2(kr_{j,2})&J_0^2(kr_{j,2})&F_{j,k,2,2}\\
		Y_0(kr_{j,1})Y_0(kr_{j,2})&J_0(kr_{j,1})J_0(kr_{j,2})&F_{j,k,1,2}
	\end{pmatrix}.
\end{aligned}$$
Therefore, the variance of the radiated fields $\mathbf{Var}(u_{j,k})$ can be recovered from $\mathbf{Var}(u_{j,k}) = \mathbf{Var}(\operatorname{Re}(u_{j,k})) + \mathbf{Var}(\operatorname{Im}(u_{j,k}))$ for
$j = 1, \cdots, m.$
\subsection{Phase retrieval algorithm}
Similar to \textcolor{blue}{\cite{Zhang2018Retri}}, the parameters $R,m$ and $\lambda_{j,\ell}$ for $\ell=1,2$ are taken as:
 \begin{equation}\label{paramater}
	\left.\begin{aligned}
		&m\geqslant10,\quad\lambda_{j,1}=\frac12,\quad k^*=\frac\pi{30a},\quad\tau\geqslant6,\\
		&\left\{\begin{array}{lll}
			R=\tau a,&\lambda_{j,2}=\frac12+\frac\pi{2kR},&\mathrm{~if~}k\in\mathbb{K}_N\backslash\{k^*\},\\
			R=6a,&\lambda_{j,2}=-\frac32,&\mathrm{~if~}k=k^*.
		\end{array}\right.
	\end{aligned}\right.
\end{equation}
The selection of parameters \textcolor{blue}{\eqref{paramater}} ensures the equations  \textcolor{blue}{\eqref{Ef}} and \textcolor{blue}{\eqref{Vf}} are uniquely solvable. It will be discussed in the next section.
However, in most cases, the measured data contain noise, which means that formulas \textcolor{blue}{\eqref{E}} and \textcolor{blue}{\eqref{Var}} cannot be directly applied in computations. Therefore, we present an algorithm with perturbed data. Algorithm \textcolor{blue}{\ref{PR}} describes the PR algorithm process.

\begin{algorithm}[ht]
\caption{Phase retrieval with reference point sources (PR) algorithm}
\label{PR}
\begin{algorithmic}[1]
	\State {Take the parameters $R,m$ and $\lambda_{j,\ell}$ for $j=1,\cdots,m,\ell=1,2$ as in \textcolor{blue}{\eqref{paramater}};}
	\State{Measure the noisy phaseless data on $\partial B_R$ for all $k \in \mathbb{K}_N$: $\{\mathbf{E}^\epsilon(|u(x,k)|^2)\}$ and
		$\{\mathbf{Var}^\epsilon(|u(x,k)|^2)\}$;}
	\State{Introduce the reference point sources $\delta_{j,\ell}$ into the inverse random source system $f$ for $j = 1, \cdots, m$ and $\ell = 1,2$.
		Then, for each $k \in \mathbb{K}_N$, collect the corresponding phaseless statistical data on $\partial B_j$, including:
$\mathbf{E}^\epsilon(|v_{j,1,k}|^2)$, $\mathbf{E}^\epsilon(|v_{j,2,k}|^2)$,
		$\mathbf{Cov}^\epsilon(|v_{j,1,k}|^2, |v_{j,2,k}|^2)$,
		$\mathbf{Cov}^\epsilon(|u_{j,k}|^2, |v_{j,1,k}|^2)$,
		$\mathbf{Cov}^\epsilon(|u_{j,k}|^2,|v_{j,2,k}|^2)$, $\mathbf{Var}^\epsilon(|v_{j,1,k}|^2)$ and $\mathbf{Var}^\epsilon(|v_{j,2,k}|^2)$ for $j=1,\cdots,m$;
	}
	\State{Recover the mean of the radiated fields $\{\mathbf{E}^\epsilon(u(x,k)):x\in\partial B_j,k\in\mathbb{K}_N\}$ for $j=1,\cdots,m$ from formula \textcolor{blue}{\eqref{E}}, the covariance $\mathbf{Cov}^\epsilon(\operatorname{Re}(u_{j,k}), \operatorname{Im}(u_{j,k}))$, and the variance: $\mathbf{Var}^\epsilon(\operatorname{Re}(u_{j,k}))$, $\mathbf{Var}^\epsilon(\operatorname{Im}(u_{j,k}))$ for $j=1,\cdots,m$ from formula \textcolor{blue}{\eqref{Var}}.}
	\end{algorithmic}
\end{algorithm}
\subsection{Stability of phase retrivel}
In this section, we will prove that the norm of the determinant of the coefficient matrix for each system of equations \textcolor{blue}{\eqref{Ef}} and \textcolor{blue}{\eqref{Vf}} has a strictly positive lower bound. This ensures the uniqueness of phase information of the radiated fields. Additionally, we will also further analyze stability results of the recovered statistics.

\begin{thm}\label{det}
	Under \textcolor{blue}{\eqref{paramater}}, we have the following estimate
	
	\begin{equation}\label{detA}
		\left|\det(A_{j,k})\right|\geqslant\begin{cases}\dfrac{M}{kR},\quad\text{if }k\in\mathbb{K}_N\backslash\{k^*\},\\M^*,\quad\text{if }k=k^*,\end{cases}\quad j=1,\cdots,m,
	\end{equation}
	and
	\begin{equation}\label{detD}
		\left|\det(D_{j,k})\right|\geqslant\begin{cases}\left(\dfrac{M}{kR}\right)^3,\quad\text{if }k\in\mathbb{K}_N\backslash\{k^*\},\\(M^*)^3,\quad\text{if }k=k^*,\end{cases}\quad j=1,\cdots,m,
	\end{equation}
	where $M= \frac{20\tau-7}{20\tau }$ and $M^{*}= \frac 49.$
\end{thm}
\begin{proof}
	Regarding \textcolor{blue}{\eqref{detA}}, the proof can be found in \textcolor{blue}{\cite{Zhang2018Retri}}.
	Now, we proof \textcolor{blue}{\eqref{detD}}.
	Set $s_\ell = kr_{j,\ell}$, we have
	\begin{equation}
		\begin{aligned}
			\det(D_{j,k}) =& Y_0^2(s_1)\left[J_0^2(s_2)(-Y_0(s_1)J_0(s_2)-Y_0(s_2)J_0(s_1))+2J_0(s_1)J_0^2(s_2)Y_0(s_2)\right]\\
			&-Y_0^2(s_2)\left[J_0^2(s_1)(-Y_0(s_1)J_0(s_2)-Y_0(s_2)J_0(s_1))+2J_0^2(s_1)J_0(s_2)Y_0(s_1)\right]\\
			&+Y_0(s_1)Y_0(s_2)\left[J_0^2(s_1)(-2Y_0(s_2)J_0(s_2))+2J_0(s_2)^2Y_0(s_1)J_0(s_1)\right]\
			\\
			=&\left(J_0(s_1)\cdot Y_0(s_2)-Y_0(s_1)\cdot J_0(s_2)\right)^3.
		\end{aligned}
	\end{equation}
	Since
	$$\det(A_{j,k}) =J_0(s_1)\cdot Y_0(s_2)-Y_0(s_1)\cdot J_0(s_2),$$
	it follows that
	\begin{equation}
		\det(D_{j,k}) = (\det(A_{j,k}))^3.
	\end{equation}
\end{proof}
\begin{rem}
	We would like to emphasize that, aside from the parameter settings selected in (\textcolor{blue}{\ref{paramater}}), there are alternative strategies available that can ensure the inequalities (\textcolor{blue}{\ref{detA}}) and (\textcolor{blue}{\ref{detD}}) hold, thereby guaranteeing the unique solvability of the two systems of equations (\textcolor{blue}{\ref{Ef}}) and (\textcolor{blue}{\ref{Vf}}).
\end{rem}
Now, we proceed to investigate the stability of the phase retrieval formula. For a fixed $k$ and $j$,
we consider the following perturbed linear system with the unknowns $\mathbf{E}^\epsilon(\mathrm{Re}(u_{j,k}))$ and $\mathbf{E}^\epsilon(\mathrm{Im}(u_{j,k}))$:
\begin{equation}\label{E_eps}
	\begin{aligned}
		&Y_0(kr_{j,1})\cdot\mathbf{E}^\epsilon(\mathrm{Re}(u_{j,k}))-J_0(kr_{j,1})\cdot\mathbf{E}^\epsilon(\mathrm{Im}(u_{j,k}))=f_{j,1,k}^\epsilon,\\
		&Y_0(kr_{j,2})\cdot\mathbf{E}^\epsilon(\mathrm{Re}(u_{j,k}))-J_0(kr_{j,2})\cdot\mathbf{E}^\epsilon(\mathrm{Im}(u_{j,k}))=f_{j,2,k}^\epsilon,
	\end{aligned}
\end{equation}
where

\begin{equation}\label{err_fc}
		f_{j,\ell,k}^{\epsilon} = 2\left(\mathbf{E}^{\epsilon}(|v_{j,\ell,k}|)^{2} - \mathbf{E}^{\epsilon}(|u_{j,k}|)^{2}\right) - \frac{1}{8}\left|H_{0}^{(1)}(kr_{j,\ell})\right|^{2},
	\quad \ell=1,2.
\end{equation}
It is easy to observe that the solutions to the perturbed equations \textcolor{blue}{\eqref{E_eps}} can also be expressed using formula \textcolor{blue}{\eqref{E}}, with $f_{j,\ell,k}^\epsilon$ replacing $f_{j,\ell,k}$ ($\ell = 1, 2$).
Let
\[
\mathcal{E}(u)= \begin{pmatrix} \mathbf{E}(\mathrm{Re}(u_{j,k})) \\ \mathbf{E}(\mathrm{Im}(u_{j,k})) \end{pmatrix}, \quad
\mathcal{F} = \begin{pmatrix} f_{j,1,k} \\ f_{j,2,k} \end{pmatrix},
\]
and
\[
\mathcal{E}^\epsilon(u) = \begin{pmatrix} \mathbf{E}^\epsilon(\mathrm{Re}(u_{j,k})) \\ \mathbf{E}^\epsilon(\mathrm{Im}(u_{j,k})) \end{pmatrix}, \quad
\mathcal{F}^\epsilon = \begin{pmatrix} f_{j,1,k}^\epsilon \\ f_{j,2,k}^\epsilon \end{pmatrix}.
\]
Before analyzing stability, we state a boundedness lemma for $J_0$ and $Y_0$, whose estimates are derived in the proof of Theorem~3.2 of \textcolor{blue}{\cite{Zhang2018Retri}}.

\begin{lem}\label{lemma:J0Y0}
Let $t_\ell=kr_{j,\ell}$ for $\ell=1,2$.

(i) If $k\in\mathbb{K}_N\setminus\{k^*\}$, then
\begin{equation}\label{J0Y01}
    \left|J_0(t_\ell)\right|\leqslant \frac{0.82}{\sqrt{kR(1-\lambda_{j,\ell})}},\qquad
\left|Y_0(t_\ell)\right|\leqslant \frac{0.82}{\sqrt{kR(1-\lambda_{j,\ell})}}.
\end{equation}

(ii) If $k=k^*=\dfrac{\pi}{30a}$, then
\begin{equation}\label{J0Y02}
    \left|J_0(t_\ell)\right|\leqslant 1,\qquad
\left|Y_0(t_\ell)\right|\leqslant 1.
\end{equation}

\end{lem}
We first establish stability for the expectations.
\begin{thm}\label{Esta}
Under \textcolor{blue}{\eqref{paramater}}, it follows that
\begin{align*}
\|\mathcal{E}^\epsilon(u) - \mathcal{E}(u)\|_2
\leqslant C \sum_{\ell=1}^2
\left(\mathbf{E}^\epsilon(|v_{j,\ell,k}|^2) - \mathbf{E}(|v_{j,\ell,k}|^2) \right)
+ \left( \mathbf{E}^\epsilon(|u_{j,k}|^2) - \mathbf{E}(|u_{j,k}|^2)\right),
\end{align*}
where $C>0$ is a constant depends on $k, R, M, M^*, \lambda_{j,2}$.
\end{thm}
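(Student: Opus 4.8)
The plan is to exploit the fact, already established, that $\mathcal{E}(u)$ and $\mathcal{E}^\epsilon(u)$ solve the same linear system \eqref{Ef}, resp. \eqref{E_eps}, with coefficient matrix $A_{j,k}$ and right-hand sides $\mathcal{F}$, resp. $\mathcal{F}^\epsilon$. Subtracting, $A_{j,k}\bigl(\mathcal{E}^\epsilon(u)-\mathcal{E}(u)\bigr)=\mathcal{F}^\epsilon-\mathcal{F}$, so that
\[
\|\mathcal{E}^\epsilon(u)-\mathcal{E}(u)\|_2\leqslant \bigl\|A_{j,k}^{-1}\bigr\|_2\,\|\mathcal{F}^\epsilon-\mathcal{F}\|_2 .
\]
First I would bound $\|A_{j,k}^{-1}\|_2$: by Cramer's rule its entries are the entries of $A_{j,k}$ (namely $J_0(kr_{j,\ell})$ and $Y_0(kr_{j,\ell})$) divided by $\det(A_{j,k})$, so Lemma~\ref{lemma:J0Y0} bounds the numerators and Theorem~\ref{det} (estimate \eqref{detA}) bounds $|\det(A_{j,k})|$ from below by $M/(kR)$ (or $M^*$ when $k=k^*$); combining these yields $\|A_{j,k}^{-1}\|_2\leqslant C_1$ for an explicit constant depending only on $k,R,M,M^*,\lambda_{j,2}$.

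Next I would bound $\|\mathcal{F}^\epsilon-\mathcal{F}\|_2$. From the definitions \eqref{f} and \eqref{err_fc}, the Hankel term $\tfrac18|H_0^{(1)}(kr_{j,\ell})|^2$ is deterministic and cancels, leaving
\[
f_{j,\ell,k}^\epsilon-f_{j,\ell,k}=2\Bigl[\bigl(\mathbf{E}^\epsilon(|v_{j,\ell,k}|^2)-\mathbf{E}(|v_{j,\ell,k}|^2)\bigr)-\bigl(\mathbf{E}^\epsilon(|u_{j,k}|^2)-\mathbf{E}(|u_{j,k}|^2)\bigr)\Bigr].
\]
Passing to the $\ell^2$ norm over $\ell=1,2$ and applying the triangle inequality gives a bound by $\sum_{\ell=1}^2\bigl(\mathbf{E}^\epsilon(|v_{j,\ell,k}|^2)-\mathbf{E}(|v_{j,\ell,k}|^2)\bigr)+\bigl(\mathbf{E}^\epsilon(|u_{j,k}|^2)-\mathbf{E}(|u_{j,k}|^2)\bigr)$ up to an absolute multiplicative constant (absorbing the factor $2$ and the two copies of the $u$-term into $C$). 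Multiplying the two bounds and renaming constants yields the claimed estimate with $C=C(k,R,M,M^*,\lambda_{j,2})$.

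The only mild subtlety is bookkeeping the matrix norm: one can either work with $\|\cdot\|_2$ directly via $\|A_{j,k}^{-1}\|_2\leqslant \|A_{j,k}^{-1}\|_F$ and the explicit entry bounds, or pass through $\|\cdot\|_\infty$ (max absolute row sum) and use equivalence of norms on $\mathbb{R}^2$; either way the estimate is elementary once Lemma~\ref{lemma:J0Y0} and \eqref{detA} are in hand. I expect no real obstacle here — the main point to be careful about is that the bound in \eqref{detA} is case-split at $k=k^*$, so the constant $C$ must be taken as the maximum over the two cases (hence its dependence on both $M$ and $M^*$), and that the term $|H_0^{(1)}|^2$ genuinely drops out so that the right-hand side involves only the stated phaseless-data discrepancies and not the Hankel function.
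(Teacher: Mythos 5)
Your proposal is correct and follows essentially the same route as the paper's proof: subtract the two linear systems, bound $\|A_{j,k}^{-1}\|$ (the paper uses the Frobenius norm) via the explicit adjugate entries together with Lemma~\ref{lemma:J0Y0} and the determinant lower bound \eqref{detA}, and observe that the deterministic Hankel term cancels in $\mathcal{F}^\epsilon-\mathcal{F}$ so that only the phaseless-data discrepancies remain. Your remarks about the case split at $k=k^*$ and the norm bookkeeping match what the paper does.
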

\begin{proof}

From \textcolor{blue}{\eqref{Ef}}, the linear systems are given by
\[
A_{j,k} \mathcal{E}(u)= \mathcal{F}, \qquad
A_{j,k} \mathcal{E}^\epsilon(u) = \mathcal{F}^\epsilon.
\]
By inverting \(A_{j,k}\), we obtain
\[
\mathcal{E}(u)= A_{j,k}^{-1} \mathcal{F}, \quad
\mathcal{E}^\epsilon(u) = A_{j,k}^{-1} \mathcal{F}^\epsilon,
\]
which implies
\[
\|\mathcal{E}^\epsilon(u) - \mathcal{E}(u)\|_2
= \|A_{j,k}^{-1} (\mathcal{F}^\epsilon - \mathcal{F})\|_2
\leqslant \|A_{j,k}^{-1}\|_F \, \|\mathcal{F}^\epsilon - \mathcal{F}\|_2.
\]
Using the explicit form
\[
A_{j,k}^{-1} = \frac{1}{|\det A_{j,k}|}
\begin{pmatrix}
- J_0(kr_{j,2}) & J_0(kr_{j,1}) \\[1mm]
- Y_0(kr_{j,2}) &  Y_0(kr_{j,1})
\end{pmatrix}
\]
and applying Lemma~\textcolor{blue}{\ref{lemma:J0Y0}} and \textcolor{blue}{\eqref{detA}}, we deduce that
\[
\|A_{j,k}^{-1}\|_F \leqslant
\begin{cases}
\frac{1.64}{M} \sqrt{\frac{kR}{1-\lambda_{j,2}}}, & k \in \mathbb{K}_N \setminus \{k^*\},\\[1em]
\frac{2}{M^*}, &  k = k^*.
\end{cases}
\]
Finally,  we have
\begin{equation}
\begin{aligned}
    &\| \mathcal{E}^\epsilon(u) - \mathcal{E}(u)\|_2 \leq C_1 \| \mathcal{F}^\epsilon - \mathcal{F} \|_2 \\
    \leqslant& 2C_1(\left( \left( \mathbf{E}^\epsilon (|v_{j,1,k}|^2) - \mathbf{E}^\epsilon (|u_{j,k}|^2) \right) - \left( \mathbf{E}(|v_{j,1,k}|^2) - \mathbf{E}(|u_{j,k}|^2 \right)\right))^2 \\
    &+ \left( \left( \mathbf{E}^\epsilon (|v_{j,2,k}|^2) - \mathbf{E}^\epsilon (|u_{j,k}|^2) \right) - \left( \mathbf{E} (|v_{j,2,k}|^2) - \mathbf{E}(|u_{j,k}|^2)\right) \right)^2 )^{\frac{1}{2}} \\
    =& 2C_1 (\left( \left( \mathbf{E}^\epsilon (|v_{j,1,k}|^2) - \mathbf{E}(|v_{j,1,k}|^2) \right) - \left( \mathbf{E}^\epsilon (|u_{j,k}|^2) - \mathbf{E} (|u_{j,k}|^2) \right) \right)^2 \\
    &+ \left( \left( \mathbf{E}^\epsilon (|v_{j,2,k}|^2) - \mathbf{E} (|v_{j,2,k}|^2) \right) - \left( \mathbf{E}^\epsilon (|u_{j,k}|^2) - \mathbf{E} (|u_{j,k}|^2) \right) \right)^2)^{\frac{1}{2}} \\
    \leqslant& C_2 \left( \left| \mathbf{E}^\epsilon (|v_{j,1,k}|^2) - \mathbf{E}(|v_{j,1,k}|^2) \right| + \left| \mathbf{E}^\epsilon (|v_{j,2,k}|^2) - \mathbf{E}(|v_{j,2,k}|^2) \right| + \left| \mathbf{E}^\epsilon (|u_{j,k}|^2) - \mathbf{E}( |u_{j,k}|^2) \right| \right).
\end{aligned}
\end{equation}
\end{proof}

We now turn to analyze the stability of the variance.
Define
\[
\mathcal{V}(u) =
\begin{pmatrix}
\mathbf{Var}(\mathrm{Re}(u_{j,k})) \\
\mathbf{Var}(\mathrm{Im}(u_{j,k})) \\
\mathbf{Cov}(\mathrm{Re}(u_{j,k}), \mathrm{Im}(u_{j,k}))
\end{pmatrix}, \quad
\mathcal{V}^\epsilon(u) =
\begin{pmatrix}
\mathbf{Var}^\epsilon(\mathrm{Re}(u_{j,k})) \\
\mathbf{Var}^\epsilon(\mathrm{Im}(u_{j,k})) \\
\mathbf{Cov}^\epsilon(\mathrm{Re}(u_{j,k}), \mathrm{Im}(u_{j,k}))
\end{pmatrix},
\]
\begin{equation}
\begin{aligned}
\Delta_{\ell_1,\ell_2}^{vv} &:=
\mathbf{Cov}^\epsilon\big(|v_{j,\ell_1,k}|^2, |v_{j,\ell_2,k}|^2\big)
- \mathbf{Cov}\big(|v_{j,\ell_1,k}|^2, |v_{j,\ell_2,k}|^2\big),\\[1ex]
\Delta_{\ell}^{uv} &:=
\mathbf{Cov}^\epsilon\big(|u_{j,k}|^2, |v_{j,\ell,k}|^2\big)
- \mathbf{Cov}\big(|u_{j,k}|^2, |v_{j,\ell,k}|^2\big),\\[1ex]
\Delta^{uu} &:=
\mathbf{Var}^\epsilon\big(|u_{j,k}|^2\big)
- \mathbf{Var}\big(|u_{j,k}|^2\big).
\end{aligned}\notag
\end{equation}

\begin{thm}\label{var_stab}
Under \textcolor{blue}{\eqref{paramater}}, we have
\begin{align}
\|\mathcal{V}^\epsilon(u) - \mathcal{V}(u)\|_2
\leqslant C \sum_{1\leqslant \ell_1 \leqslant \ell_2 \leqslant 2}
\Delta_{\ell_1,\ell_2}^{vv} + \Delta_{\ell_1}^{uv} + \Delta_{\ell_2}^{uv} + \Delta^{uu},
\end{align}
where $C>0$ is a constant depends on $k, R, M, M^*, \lambda_{j,2}$.
\end{thm}

\begin{proof}
First, consider the real part.

\textbf{Case 1:} $k\in\mathbb{K}_N\backslash\{k^*\}$. From \textcolor{blue}{\eqref{Var}}, \textcolor{blue}{\eqref{detD}} and \textcolor{blue}{\eqref{J0Y01}}, we obtain
\begin{equation}
\begin{aligned}
    &|\mathbf{Var}^\epsilon(\mathrm{Re}(u_{j,k})-\mathbf{Var}(\mathrm{Re}(u_{j,k}))|\\
    \leqslant &\frac{k^3 R^3}{M^3} \bigg( |(F^\epsilon_{j,k,1,1}-F_{j,k,1,1})(-J_0^3(kr_{j,2}) Y_0(k r_{j,1}) - J_0(k r_{j,1}) J_0^2(k r_{j,2}) Y_0(k r_{j,2}) + 2 J_0(k r_{j,1}) J_0^2(k r_{j,2}) Y_0(k r_{j,2}))| \\
    & +| (F^\epsilon_{j,k,2,2}-F_{j,k,2,2})(J_0^2(k r_{j,1}) J_0(k r_{j,2}) Y_0(k r_{j,1}) +J_0^3(k r_{j,1}) Y_0(k r_{j,2}) - 2 J_0^2(k r_{j,1}) J_0(k r_{j,2}) Y_0(k r_{j,1})) | \\
    &+ | (F^\epsilon_{j,k,1,2}-F_{j,k,1,2})( -2 J_0^2(k r_{j,1}) J_0(k r_{j,2}) Y_0(k r_{j,2}) + 2 J_0(k r_{j,1}) J_0^2(k r_{j,2}) Y_0(k r_{j,1})) | \bigg) \\
    \leqslant& \frac{16k^3 R^3}{M^3} \left( \frac{0.82}{\sqrt{kR(1-\lambda_{j,2})}} \right)^4 \sum_{1\leqslant l_1\leqslant l_2\leqslant 2} ( |\mathbf{Cov}^\epsilon(|v_{j,l_1,k}|^2, |v_{j,l_2,k}|^2)-\mathbf{Cov}(|v_{j,l_1,k}|^2, |v_{j,l_2,k}|^2)|\\
    &\quad +|\mathbf{Cov}^\epsilon(|u_{j,k}|^2, |v_{j,l_1,k}|^2)-\mathbf{Cov}(|u_{j,k}|^2, |v_{j,l_1,k}|^2)| +|\mathbf{Cov}^\epsilon(|u_{j,k}|^2, |v_{j,l_2,k}|^2)-\mathbf{Cov}(|u_{j,k}|^2, |v_{j,l_2,k}|^2)| \\
    &\quad + |\mathbf{Var}^\epsilon(|u_{j,k}|^2)- \mathbf{Var}(|u_{j,k}|^2)| ) \\
    \leqslant&C_1\sum_{1\leqslant l_1\leqslant l_2\leqslant 2}\Delta_{\ell_1,\ell_2}^{vv} +\Delta_{\ell_1}^{uv}+\Delta_{\ell_2}^{uv}+\Delta^{uu}
\end{aligned}\notag
\end{equation}
with
$$
C_1 = \frac{16 k^3 R^3}{M^3} \left( \frac{0.82}{\sqrt{kR(1-\lambda_{j,2})}} \right)^4.
$$

\textbf{Case 2:} $k=k^*$. A uniform bound holds:
\begin{equation}
|\mathbf{Var}^\epsilon(\mathrm{Re}(u_{j,k^*})) - \mathbf{Var}(\mathrm{Re}(u_{j,k^*}))|
\leqslant C_2 \sum_{1\leqslant \ell_1 \leqslant \ell_2 \leqslant 2}
\Delta_{\ell_1,\ell_2}^{vv} + \Delta_{\ell_1}^{uv} + \Delta_{\ell_2}^{uv} + \Delta^{uu},
\end{equation}
with
$$
C_2 = \frac{16}{M^*}.
$$
Analogous estimates hold for the imaginary part and for the covariance term.
Hence, we obtain the \(2\)-norm bound:
\begin{equation}
\|\mathcal{V}^\epsilon(u) - \mathcal{V}(u)\|_2
\leqslant C \sum_{1\leqslant \ell_1 \leqslant \ell_2 \leqslant 2}
\Delta_{\ell_1,\ell_2}^{vv} + \Delta_{\ell_1}^{uv} + \Delta_{\ell_2}^{uv} + \Delta^{uu},
\end{equation}
where
\[
C = \begin{cases}
\sqrt 3 C_1, & k \in \mathbb{K}_N \setminus \{k^*\},\\
\sqrt 3 C_2, & k = k^*.
\end{cases}
\]
This completes the proof.
\end{proof}

\section{Stochastic inverse problem}\label{3}
In this section, we derive the Fredholm integral equations to reconstruct random source by examining the expectation and variance of the mild solution. Furthermore, the stability of the IRSP has been proved, and thus the stability of the phaseless IRSP is also established.
\subsection{Integral equations}

By taking the expectation on both sides of \textcolor{blue}{\eqref{solve}} and applying
\begin{equation}\label{Ewy}
	\mathbf{E}\left(\int_{\Omega_0}G_k(x,y)\sigma(y)\mathrm{d}{W}_y\right) = 0,
\end{equation}
it deduces
\begin{equation}\label{Eu}
	\mathbf{E}(u(x,k))=\int_{\Omega_0} G_k(x,y)g(y)\mathrm{d}y.
\end{equation}
This equation can be employed to reconstruct $g$. The reconstruction formula for $g$ is similar to a deterministic inverse problem. The difference is that the known boundary data is obtained from the expectation of the radiation wave field. To simplify the solution process, complex-valued quantities are separated into their real and imaginary parts. More explicitly, we have
\begin{equation}\label{reu}
	\mathrm{Re}u(x,k)=\int_{\Omega_0}\mathrm{Re}G_k(x,y)g(y)\mathrm{d}y+\int_{\Omega_0}\mathrm{Re}G_k(x,y)\sigma(y)\mathrm{d}{W}_{y}\end{equation}
and
\begin{equation}\label{imu}
	\mathrm{Im}u(x,k)=\int_{\Omega_0}\mathrm{Im}G_k(x,y)g(y)\mathrm{d}y+\int_{\Omega_0}\mathrm{Im}G_k(x,y)\sigma(y)\mathrm{d}{W}_{y}.
\end{equation}

Taking the expectation on both sides of \textcolor{blue}{\eqref{reu}} and \textcolor{blue}{\eqref{imu}}, we obtain
\begin{equation}\label{Ereu}
	\mathbf{E}(\mathrm{Re}u(x,k))=\int_{\Omega_0}\mathrm{Re}G_k(x,y)g(y)\mathrm{d}y,
\end{equation}
\begin{equation}\label{Eimu}
	\mathbf{E}(\mathrm{Im}u(x,k))=\int_{\Omega_0}\mathrm{Im}G_k(x,y)g(y)\mathrm{d}y.\end{equation}
Substituting the real and imaginary parts of the two-dimensional Green function into the above equations yields
\begin{equation}\label{Ereu2}
	\mathbf{E}(\mathrm{Re}u(x,k))=\frac14\int_{\Omega_0}Y_0(k|x-y|)g(y)\mathrm{d}y,\end{equation}
\begin{equation}\label{Eimu2}
	\mathbf{E}(\mathrm{Im}u(x,k))=-\frac14\int_{\Omega_0}J_0(k|x-y|)g(y)\mathrm{d}y.
\end{equation}
We can combine two equations to perform the inversion of $g$.

From Lemma \textcolor{blue}{\ref{lem_Wx}} of Appendix \textcolor{blue}{\ref{pf}}, it finds
\begin{equation}\label{ERe2}
	\mathbf{E}\left(\left|\int_{\Omega_0}\mathrm{Re}G_k(x,y)\sigma(y)\mathrm{d}{W}_y\right|^2\right)=\int_{\Omega_0}|\mathrm{Re}G_k(x,y)|^2\sigma^2(y)\mathrm{d}y,
\end{equation}
\begin{equation}\label{Eimu2}
	\mathbf{E}\left(\left|\int_{\Omega_0}\mathrm{Im}G_k(x,y)\sigma(y)\mathrm{d}{W}_y\right|^2\right)=\int_{\Omega_0}|\mathrm{Im}G_k(x,y)|^2\sigma^2(y)\mathrm{d}y
\end{equation}
and
\begin{equation}\label{Ereim}
	\mathbf{E}\left(\int_{\Omega_0}\mathrm{Re}G_k(x,y)\sigma(y)\mathrm{d}{W}_y\cdot \int_{\Omega_0}\mathrm{Im}G_k(x,y)\sigma(y)\mathrm{d}{W}_y\right)=\int_{\Omega_0}\mathrm{Re}G_k(x,y)\mathrm{Im}G_k(x,y)\sigma^2(y)\mathrm{d}y.
\end{equation}
By taking the variance on both sides of \textcolor{blue}{\eqref{reu}} and \textcolor{blue}{\eqref{imu}}, it leads to the following equations

\begin{equation}\label{varre1}
	\mathbf{Var}(\mathrm{Re}u(x,k))=\int_{\Omega_0}|\mathrm{Re}G_k(x,y)|^2\sigma^2(y)\mathrm{d}y,
\end{equation}

\begin{equation}\label{varim1}
	\mathbf{Var}(\mathrm{Im}u(x,k))=\int_{\Omega_0}|\mathrm{Im}G_k(x,y)|^2\sigma^2(y)\mathrm{d}y.
\end{equation}
Furthermore, substituting the real and imaginary parts of Green function into \textcolor{blue}{\eqref{varre1}} and \textcolor{blue}{\eqref{varim1}}, we obtain
\begin{equation}\label{varre}
	\mathbf{Var}(\mathrm{Re}u(x,k))=\frac1{16}\int_{\Omega_0}Y_0^2(k|x-y|)\sigma^2(y)\mathrm{d}y,\end{equation}
\begin{equation}\label{varim}
	\mathbf{Var}(\mathrm{Im}u(x,k))=\frac1{16}\int_{\Omega_0}J_0^2(k|x-y|)\sigma^2(y)\mathrm{d}y.\end{equation}
Then we can recover the variance from above Fredholm integral equations.
Using \textcolor{blue}{\eqref{Ewy}}, \textcolor{blue}{\eqref{reu}}-\textcolor{blue}{\eqref{Eimu}} and \textcolor{blue}{\eqref{Ereim}}, it follows that
\begin{equation}
	\begin{aligned}
		\mathbf{E}(\mathrm{Re}u(x,k)\cdot\mathrm{Im}u(x,k))
		=&\int_{\Omega_0}\mathrm{Re}G_k(x,y)g(y)\mathrm{d}y\cdot\int_{\Omega_0}\mathrm{Im}G_k(x,y)g(y)\mathrm{d}y\\&+\mathbf{E}(\int_{\Omega_0}\mathrm{Re}G_k(x,y)\sigma(y)\mathrm{d}{W}_{y}\cdot\int_{\Omega_0}\mathrm{Im}G_k(x,y)\sigma(y)\mathrm{d}{W}_{y})\\
		=&\mathbf{E}(\mathrm{Re}u(x,k))\cdot\mathbf{E}(\mathrm{Im}u(x,k))+\int_{\Omega_0}\mathrm{Re}G_k(x,y)\mathrm{Im}G_k(x,y)\sigma^2(y)\mathrm{d}y.
	\end{aligned}
\end{equation}
Hence, we obtain
\begin{equation}\label{covreim1}
	\mathbf{Cov}(\mathrm{Re}u(x,k),\mathrm{Im}u(x,k)) = \int_{\Omega_0}\mathrm{Re}G_k(x,y)\mathrm{Im}G_k(x,y)\sigma^2(y)\mathrm{d}y,
\end{equation}
that is,
\begin{equation}\label{covreim}
	\mathbf{Cov}(\mathrm{Re}u(x,k),\mathrm{Im}u(x,k)) = -\frac{1}{16}\int_{\Omega_0}Y_0(k|x-y|)J_0(k|x-y|)\sigma^2(y)\mathrm{d}y.
\end{equation}
\subsubsection{Stability of integral equations}
In this section, we establish the stability result of the integral equations \textcolor{blue}{\eqref{Eu}} and \textcolor{blue}{\eqref{varim1}}.
\begin{thm}\label{sta_g}
	Assume that $g\in L^2(\Omega_0)$ and $\|g\|_{H_p(\Omega_0)}\leqslant M$. Then, for any \( k \in \mathbb{K}_N  \), the following estimate holds
\begin{equation}\label{sta_g1}
\|g(x)\|_{L^2(\Omega_0)}\leqslant C_{k,p,M}\| \mathbf{E} (u(x, k))\|^\frac{p}{p+2}_{L^2(\mathbb{R}^2)},
\end{equation}
	where $$C_{k,p,M}=(2\pi)^\frac{p}{2(p+2)}k^\frac{2p}{p+2}M^\frac{2}{p+2}.$$
\end{thm}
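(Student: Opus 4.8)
The estimate \eqref{sta_g1} is a conditional (Hölder-type) stability bound for the first-kind integral equation \eqref{Eu}, so the natural plan is to convert the integral relation into an estimate for $g$ in the Sobolev scale and then interpolate against the a priori bound $\|g\|_{H_p(\Omega_0)}\le M$. First I would extend $g$ by zero to $\mathbb{R}^2$: since $\operatorname{supp}g$ is compactly contained in $\Omega_0$, the extension lies in $H^p(\mathbb{R}^2)$ with norm controlled by $\|g\|_{H_p(\Omega_0)}$, and \eqref{Eu} reads $\mathbf{E}(u(\cdot,k))=G_k*g$ on $\mathbb{R}^2$. Because $G_k$ is the radiating fundamental solution of $\Delta+k^2$, i.e. $(\Delta+k^2)G_k=\delta$, the Fourier transform turns this into the pointwise identity
\[
\widehat{g}(\xi)=\bigl(k^2-|\xi|^2\bigr)\,\widehat{\mathbf{E}(u(\cdot,k))}(\xi),\qquad\xi\in\mathbb{R}^2,
\]
equivalently $\Delta\,\mathbf{E}(u(\cdot,k))+k^2\,\mathbf{E}(u(\cdot,k))=g$ in $\mathbb{R}^2$.

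The next step is a negative-order a priori estimate. The Fourier multiplier $\xi\mapsto(k^2-|\xi|^2)(1+|\xi|^2)^{-1}$ is bounded on $\mathbb{R}^2$ — for $k\ge1$ its modulus is largest at $\xi=0$, hence of size $k^2$, while for $k=k^*$ it is of size $1$ — so Plancherel's theorem gives
\[
\|g\|_{H^{-2}(\mathbb{R}^2)}\;\le\;c_k\,\|\mathbf{E}(u(\cdot,k))\|_{L^2(\mathbb{R}^2)},\qquad c_k=\mathcal{O}(k^2),
\]
and, once the Fourier/Sobolev normalisation is fixed, $c_k=\sqrt{2\pi}\,k^2$. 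Combining this with $\|g\|_{H^p(\mathbb{R}^2)}\le M$ through the elementary Sobolev interpolation inequality (Hölder on the Fourier side; note $0=\tfrac{p}{p+2}(-2)+\tfrac{2}{p+2}p$)
\[
\|g\|_{L^2}\le\|g\|_{H^{-2}}^{\,p/(p+2)}\,\|g\|_{H^{p}}^{\,2/(p+2)},
\]
and using $\|g\|_{L^2(\Omega_0)}=\|g\|_{L^2(\mathbb{R}^2)}$, I obtain
\[
\|g\|_{L^2(\Omega_0)}\le (c_k)^{p/(p+2)}M^{2/(p+2)}\|\mathbf{E}(u(\cdot,k))\|_{L^2(\mathbb{R}^2)}^{p/(p+2)}=(2\pi)^{p/(2(p+2))}k^{2p/(p+2)}M^{2/(p+2)}\|\mathbf{E}(u(\cdot,k))\|_{L^2(\mathbb{R}^2)}^{p/(p+2)},
\]
which is exactly \eqref{sta_g1}. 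The same bound can be obtained without invoking interpolation by splitting $\int_{\mathbb{R}^2}|\widehat g|^2$ at a cutoff radius $\rho$, estimating the piece $|\xi|\le\rho$ by the multiplier bound and $\|\mathbf{E}(u(\cdot,k))\|_{L^2}$ and the piece $|\xi|>\rho$ by $\rho^{-2p}M^2$, and then optimising $\rho$; this also makes the origin of the exponent $p/(p+2)$ transparent.

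The point needing the most care is the functional setting underlying the two displayed identities: a radiating field in $\mathbb{R}^2$ decays only like $|x|^{-1/2}$, so $\mathbf{E}(u(\cdot,k))$ is generically not globally square-integrable and $\widehat{G_k}$ carries a singular part on $\{|\xi|=k\}$. In the full proof I would handle this either by reading $\mathbf{E}(u(\cdot,k))$ as the volume potential of $g$ and observing that the factor $k^2-|\xi|^2$ annihilates the $\{|\xi|=k\}$ contribution of $\widehat{G_k}$, so the identity for $\widehat g$ survives in the tempered-distributional sense, or by replacing the global $L^2(\mathbb{R}^2)$ norm by the measured data norm on a bounded region and running the cutoff version of the argument there. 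The remaining ingredients — the multiplier bound, the exponent arithmetic, and the control of $\|g\|_{H^p(\mathbb{R}^2)}$ by $M$ — are routine.
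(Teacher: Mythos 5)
Your proposal is correct and follows essentially the same route as the paper's proof: extend $g$ by zero, pass to the Fourier side via $\widehat{\mathbf{E}(u)}=\widehat{G_k}\,\hat g$ with $\widehat{G_k}(\omega)=\tfrac{1}{\sqrt{2\pi}}(k^2-|\omega|^2)^{-1}$, bound the multiplier $(k^2-|\omega|^2)/(1+|\omega|^2)$ by $k^2$, and apply H\"older with exponents $\tfrac{p+2}{2},\tfrac{p+2}{p}$ — your packaging of this as an $H^{-2}$ estimate followed by the interpolation inequality $\|g\|_{L^2}\le\|g\|_{H^{-2}}^{p/(p+2)}\|g\|_{H^p}^{2/(p+2)}$ is the identical computation written in two steps, and it reproduces the paper's constant exactly. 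Your closing caveat about the global $L^2(\mathbb{R}^2)$ integrability of the radiating field (and the singular support of $\widehat{G_k}$ on $\{|\omega|=k\}$) is a legitimate point that the paper's own proof passes over in silence.
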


\begin{proof}
	Extend $g(x)$ by zero from $\Omega_0$ to $\mathbb{R}^2$, defining it as follows:
	\[
	g(x) =
	\begin{cases}
		g(x), & x \in \Omega_0, \\
		0, & x \in \mathbb{R}^2 \setminus \Omega_0.
	\end{cases}
	\]
	Setting
	\[
	Eu(x):= \mathbf{E}(u(x,k)).
	\]
	From (\textcolor{blue}{\ref{Eu}}), it follows that
	\[
	Eu(x) = \int_{\Omega_0} G_k(x,y) g(y) \mathrm{d}y = \int_{\mathbb{R}^2} G_k(x,y) g(y) \mathrm{d}y = (G_k * g)(x),
	\]
	where \(G_k(x,y)\) is the Green's function and \(*\) denotes convolution. Applying the Fourier transform to both sides, and using the convolution theorem, we obtain
	\[
	\widehat{Eu}(\omega) = \widehat{G_k}(\omega) \cdot \hat{g}(\omega),
	\]
	where $\widehat{G_k}(\omega)$ is the Fourier transform of $G_k(x)$, and $\hat{g}(\omega)$ is the Fourier transform of $g(x)$.
	
	Using Parseval's theorem, H\"{o}lder inequality and $\widehat{G_k}(\omega) = \frac{1}{\sqrt{2\pi}}\frac{1}{k^2-|\omega|^2}$, we have
	\[
	\begin{aligned}
		\|g(x)\|_{L^2(\mathbb{R}^2)}^2 = \|\hat{g}(\omega)\|_{L^2(\mathbb{R}^2)}^2
		&= \int_{\mathbb{R}^2} \left| \frac{1}{\widehat{G_k}(\omega)} \right|^2 \left| \widehat{Eu}(\omega) \right|^{\frac{4}{p+2}} \left| \widehat{Eu}(\omega) \right|^{\frac{2p}{p+2}} \mathrm{d}\omega \\
		&\leqslant \left[ \int_{\mathbb{R}^2} \left( \frac{ \left| \widehat{Eu}(\omega) \right| }{ \left| \widehat{G_k}(\omega) \right|^2 }^{\frac{4}{p+2}} \right)^{\frac{p+2}{2}} \mathrm{d}\omega \right]^{\frac{2}{p+2}} \cdot \left[ \int_{\mathbb{R}^2} \left( \left| \widehat{Eu}(\omega) \right|^{\frac{2p}{p+2}} \right)^{\frac{p+2}{p}} \mathrm{d}\omega \right]^{\frac{p}{p+2}} \\
		&= \left( \int_{\mathbb{R}^2} (2\pi)^{\frac{p}{2}} \left( \frac{k^2 - |\omega|^2}{1+|\omega|^2} \right)^p |\hat{g}(\omega)|^2 (1+|\omega|^2)^p \mathrm{d}\omega \right)^{\frac{2}{p+2}} \cdot \| \widehat{Eu} \|_{L^2(\mathbb{R}^2)}^{\frac{2p}{p+2}} \\
		&\leqslant (2\pi)^{\frac{p}{p+2}} k^{\frac{4p}{p+2}} \|g\|_{H^p(\mathbb{R}^2)}^{\frac{4}{p+2}} \cdot \| Eu \|_{L^2(\mathbb{R}^2)}^{\frac{2p}{p+2}}.
	\end{aligned}
	\]
	Combining the above estimates, we conclude that
	\[
	\|g(x)\|_{L^2(\Omega_0)} \leqslant C_k \|\mathbf{E} (u(x, k))\|^\frac{p}{p+2}_{L^2(\mathbb{R}^2)},
	\]
	where
	\[
	C_k = (2\pi)^{\frac{p}{2(p+2)}} k^{\frac{2p}{p+2}} M^{\frac{2}{p+2}}.
	\]
\end{proof}

It shows that the norm of \( g(x) \) can be controlled by the norm of \( \mathbf{E}(u(x, k)) \) in \( L^2(\mathbb{R}^2) \). We now proceed to prove that $\sigma^2(x)$ can similarly be controlled.
\begin{lem}\label{low_control_big}
	Let \( f(x) \in L^2(\mathbb{R}^2) \) with \( \|f(x)\|^2_{L^2(\mathbb{R}^2)} = A > 0 \). Then, for any \( \epsilon > 0 \), there exists \( N'(\epsilon) > 0 \) such that for all \( N \geqslant N' \),
	\begin{equation*}
		\int_{|\omega|\geqslant N} |\hat{f}(\omega)|^2 \,\mathrm{d}\omega < \frac{A}{2} < \int_{|\omega|\leqslant N} |\hat{f}(\omega)|^2 \,\mathrm{d}\omega.
	\end{equation*}
\end{lem}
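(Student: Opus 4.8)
The plan is entirely elementary: it rests on Plancherel's theorem together with the dominated convergence theorem applied to the Fourier tail. First I would record that, under the (unitary) Fourier normalization used above, Plancherel gives $\int_{\mathbb{R}^2} |\hat f(\omega)|^2\,\mathrm{d}\omega = \|\hat f\|_{L^2(\mathbb{R}^2)}^2 = \|f\|_{L^2(\mathbb{R}^2)}^2 = A$, so in particular $|\hat f|^2 \in L^1(\mathbb{R}^2)$ and the full-space integral equals $A$.

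Next, set $\phi(N) := \int_{|\omega|\geqslant N} |\hat f(\omega)|^2\,\mathrm{d}\omega$ for $N>0$. For each fixed $\omega$ the integrand $\mathbf 1_{\{|\omega|\geqslant N\}}\,|\hat f(\omega)|^2$ is nonincreasing in $N$ and tends to $0$ as $N\to\infty$, while being dominated by the fixed integrable function $|\hat f|^2$. By the dominated convergence theorem (equivalently, by monotone convergence applied to $\int_{|\omega|\leqslant N}|\hat f|^2 \uparrow A$), one gets $\phi(N)\to 0$ as $N\to\infty$. Hence, given any $\epsilon>0$ there exists $N'=N'(\epsilon)>0$ such that $\phi(N)<\epsilon$ for all $N\geqslant N'$. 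Specializing to $\epsilon=A/2>0$ yields $\int_{|\omega|\geqslant N}|\hat f(\omega)|^2\,\mathrm{d}\omega < A/2$, and therefore $\int_{|\omega|\leqslant N}|\hat f(\omega)|^2\,\mathrm{d}\omega = A - \phi(N) > A/2$, which is exactly the two-sided inequality claimed.

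There is no genuine obstacle here; the only points requiring a word of care are (i) keeping the Fourier-transform normalization consistent so that Plancherel holds without an extraneous constant — if a constant $c$ did appear one would simply carry it through and it would cancel in the final comparison with $A$ — and (ii) observing that the parameter $\epsilon$ in the statement plays no active role beyond fixing the threshold $A/2$: the argument in fact produces, for every $\epsilon>0$, a level $N'(\epsilon)$ past which the high-frequency tail falls below $\epsilon$, and the displayed inequalities are merely the instance $\epsilon = A/2$.
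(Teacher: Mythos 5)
Your proposal is correct and follows essentially the same route as the paper: Parseval/Plancherel to identify the total integral with $A$, followed by the vanishing of the tail $\int_{|\omega|\geqslant N}|\hat f|^2\,\mathrm{d}\omega$ as $N\to\infty$, specialized to the threshold $A/2$. The only (cosmetic) difference is that you obtain the lower bound on the low-frequency part directly from the complementarity $\int_{|\omega|\leqslant N}|\hat f|^2 = A-\phi(N)$, whereas the paper invokes a second limit argument; both are valid.
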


\begin{proof}
	By Parseval's theorem, we have
	\begin{equation*}
		A = \|f(x)\|^2_{L^2(\mathbb{R}^2)} = \|\hat{f}(\omega)\|^2_{L^2(\mathbb{R}^2)} = \int_{\mathbb{R}^2} |\hat{f}(\omega)|^2 \,\mathrm{d}\omega.
		\notag
	\end{equation*}
	Taking the limit as \( N \to \infty \), we obtain
	\begin{equation*}
		\lim_{N \to \infty} \int_{|\omega|\geqslant N} |\hat{f}(\omega)|^2 \,\mathrm{d}\omega = 0.
		\notag
	\end{equation*}
	Thus, for any \( \epsilon > 0 \), there exists \( N_1(\epsilon) > 0 \) such that for all \( N > N_1 \),
	\notag
	\begin{equation*}
		\int_{|\omega|\geqslant N} |\hat{f}(\omega)|^2 \,\mathrm{d}\omega < \epsilon.
	\end{equation*}
	Moreover, we also have
	\begin{equation*}
		\lim_{N \to \infty} \int_{|\omega|\leqslant N} |\hat{f}(\omega)|^2 \,\mathrm{d}\omega = A.
		\notag
	\end{equation*}
	Hence, for any \( \epsilon > 0 \), there exists \( N_2(\epsilon) > 0 \) such that for all \( N > N_2 \),
	\begin{equation*}
		\left| \int_{|\omega|\leqslant N} |\hat{f}(\omega)|^2 \,\mathrm{d}\omega - A \right| < \epsilon,
		\notag
	\end{equation*}
	which implies
	\begin{equation*}
		A - \epsilon < \int_{|\omega|\leqslant N} |\hat{f}(\omega)|^2 \,\mathrm{d}\omega < A + \epsilon.
		\notag
	\end{equation*}
	Setting \( \epsilon = \frac{A}{2} \) and choosing \( N_3 = \max\{N_1, N_2\} \), we obtain that for all \( N > N_3 \),
	\begin{equation*}
		\int_{|\omega|\geqslant N} |\hat{f}(\omega)|^2 \,\mathrm{d}\omega < \frac{A}{2} < \int_{|\omega|\leqslant N} |\hat{f}(\omega)|^2 \,\mathrm{d}\omega.
		\notag
	\end{equation*}
\end{proof}
\begin{rem}
	It demonstrate that the energy of \( f(x) \) is concentrated in the low-frequency region, meaning that the low-frequency energy can effectively control the high-frequency energy.
\end{rem}

\begin{thm}\label{sta_s}
	Assume that $\sigma^2\in L^2(\Omega_0)$. Then, for any \( k \in \mathbb{K}_N  \), the following estimate holds
\begin{equation}\label{sta_s1}
	\|\sigma^2(x)\|_{L^2(\Omega_0)}\leqslant \tilde{C}_k\|\mathbf{Var}(\operatorname{Im}(u(x,k))\|_{L^2(\mathbb{R}^2)},
\end{equation}
	where
	$$\tilde{C}_k = 8\sqrt{2}\pi k^2.$$
\end{thm}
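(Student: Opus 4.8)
The strategy mirrors the Fourier argument used for Theorem~\ref{sta_g}, but with a different convolution kernel whose symbol is compactly supported. First I would start from \eqref{varim}, which after extending $\sigma^2$ by zero to $\mathbb{R}^2$ reads
\[
\mathbf{Var}(\operatorname{Im}u(\cdot,k))=\tfrac{1}{16}\,\Phi_k*\sigma^2,\qquad \Phi_k(x):=J_0^2(k|x|),
\]
since $\operatorname{Im}G_k(x,y)=-\tfrac14 J_0(k|x-y|)$. Taking the two-dimensional Fourier transform and applying the convolution theorem (with the same normalisation as in the proof of Theorem~\ref{sta_g}) gives $\widehat{\mathbf{Var}(\operatorname{Im}u)}(\omega)=\tfrac{1}{16}\,\widehat{\Phi_k}(\omega)\,\widehat{\sigma^2}(\omega)$, so the whole argument hinges on a precise description of the multiplier $\widehat{\Phi_k}$.

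The key step is to compute $\widehat{\Phi_k}$. From the plane-wave identity $J_0(k|x|)=\frac{1}{2\pi}\int_0^{2\pi}e^{\mathrm{i}k(\cos\phi,\sin\phi)\cdot x}\,\mathrm{d}\phi$ one obtains $\Phi_k(x)=\frac{1}{4\pi^2}\iint_{[0,2\pi]^2}e^{\mathrm{i}k(\hat e(\phi)+\hat e(\psi))\cdot x}\,\mathrm{d}\phi\,\mathrm{d}\psi$ with $\hat e(\phi)=(\cos\phi,\sin\phi)$. The substitution $\alpha=\tfrac{\phi+\psi}{2}$, $\beta=\tfrac{\phi-\psi}{2}$ turns the frequency into $k(\hat e(\phi)+\hat e(\psi))=2k\cos\beta\,\hat e(\alpha)$; hence $\widehat{\Phi_k}$ is supported in the disk $\{|\omega|\le 2k\}$, and computing the Jacobian of this substitution together with $\mathrm{d}\omega=\rho\,\mathrm{d}\rho\,\mathrm{d}\alpha$ ($\rho=|\omega|$) yields
\[
\widehat{\Phi_k}(\omega)=\frac{c_*}{\rho\sqrt{4k^2-\rho^2}},\qquad 0<\rho<2k,
\]
for a universal constant $c_*$. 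The crucial quantitative fact is the elementary bound $\rho\sqrt{4k^2-\rho^2}\le 2k^2$ on $(0,2k)$ (indeed $\rho^2(4k^2-\rho^2)$ has maximum $4k^4$, attained at $\rho=\sqrt2\,k$), so that $\widehat{\Phi_k}(\omega)\ge c_*/(2k^2)$ on the \emph{entire} disk, and therefore
\[
|\widehat{\sigma^2}(\omega)|\le \frac{32k^2}{c_*}\bigl|\widehat{\mathbf{Var}(\operatorname{Im}u)}(\omega)\bigr|\qquad\text{for }|\omega|<2k.
\]

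To finish I would square this, integrate over $\{|\omega|<2k\}$, and bound the right-hand side by $\|\mathbf{Var}(\operatorname{Im}u)\|_{L^2(\mathbb{R}^2)}^2$ via Parseval. Since $\widehat{\Phi_k}$ is band-limited, this controls only the low-frequency part of $\widehat{\sigma^2}$, so I would invoke Lemma~\ref{low_control_big} applied to $f=\sigma^2$ with cutoff $N=2k$, which gives $\|\sigma^2\|_{L^2}^2<2\int_{|\omega|<2k}|\widehat{\sigma^2}(\omega)|^2\,\mathrm{d}\omega$; combining the two inequalities and keeping track of the Fourier normalisation constants produces $\|\sigma^2\|_{L^2(\Omega_0)}\le 8\sqrt2\,\pi k^2\,\|\mathbf{Var}(\operatorname{Im}u(\cdot,k))\|_{L^2(\mathbb{R}^2)}$. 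The main obstacles I anticipate are (i) the explicit evaluation of $\widehat{\Phi_k}$ and the sharp lower bound $c_*/(2k^2)$ on its support — this is precisely where the problem differs from Theorem~\ref{sta_g}, whose inverse multiplier $1/\widehat{G_k}(\omega)$ is globally defined and only polynomially large — and (ii) bridging from band-limited control to the full $L^2$-norm of $\sigma^2$, which forces the use of the compact-support energy-concentration Lemma~\ref{low_control_big} and some care with how the cutoff $N=2k$ interacts with the admissible wavenumbers $k\in\mathbb{K}_N$.
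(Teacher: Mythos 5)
Your proposal follows essentially the same route as the paper's proof: extend $\sigma^2$ by zero, write $\mathbf{Var}(\operatorname{Im}u)$ as $\tfrac{1}{16}J_0^2(k|\cdot|)*\sigma^2$, pass to the Fourier side where $\widehat{J_0^2}(k|\omega|)=\tfrac{2}{\pi}\bigl(|\omega|\sqrt{4k^2-|\omega|^2}\bigr)^{-1}$ on $\{|\omega|<2k\}$, bound the multiplier from below using $|\omega|^2(4k^2-|\omega|^2)\leqslant 4k^4$, and invoke Lemma~\ref{low_control_big} to pass from the band-limited control to the full $L^2$-norm. The only difference is that you derive $\widehat{J_0^2}$ from the plane-wave representation whereas the paper simply quotes it, and you correctly flag the one delicate point that both arguments share, namely that the cutoff $N=2k$ must be compatible with the $\sigma^2$-dependent threshold $N'$ in Lemma~\ref{low_control_big}.
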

\begin{proof}
	Extend $\sigma^2(x)$ by zero from $\Omega_0$ to $\mathbb{R}^2$ as
	\[
	\sigma^2(x) =
	\begin{cases}
		\sigma^2(x), & x \in \Omega_0, \\
		0, & x \in \mathbb{R}^2 \setminus \Omega_0.
	\end{cases}
	\]
	From (\textcolor{blue}{\ref{varim}}), it follows that
	\[
	\mathbf{Var}(\operatorname{Im}(u(x,k))) = \frac{1}{16} \int_{\Omega_0} J_0^2(k|x-y|) \sigma^2(y) \, \mathrm{d}y = \frac{1}{16} \left(J_0^2(k|\cdot|) * \sigma^2 \right)(x).
	\]
	Taking Fourier transform on both sides and using the convolution theorem, we get
	\[
	\widehat{\mathbf{Var}(\operatorname{Im}(u(\cdot,k)))}(\omega) = \frac{1}{16} \widehat{J_0^2}(k|\omega|) \cdot \widehat{\sigma^2}(\omega).
	\]
	Recall that
	\[
	\widehat{J_0^2}(k|\omega|) =
	\begin{cases}
		\frac{2}{\pi} \frac{1}{|\omega| \sqrt{4k^2 - |\omega|^2}}, & 0 < |\omega| < 2k, \\
		0, & |\omega| \geqslant 2k.
	\end{cases}
	\]
	Using Parseval’s theorem and Lemma \textcolor{blue}{\ref{low_control_big}}, we have
	\[
	\begin{aligned}
		\|\mathbf{Var}(\operatorname{Im}(u(\cdot,k)))\|^2_{L^2(\mathbb{R}^2)} &= \|\widehat{\mathbf{Var}(\operatorname{Im}(u(\cdot,k)))}\|^2_{L^2(\mathbb{R}^2)} \\
		&= \frac{1}{256} \int_{\mathbb{R}^2} \left|\widehat{J_0^2}(k|\omega|)\right|^2 \left|\widehat{\sigma^2}(\omega)\right|^2 \mathrm{d}\omega \\
		&= \frac{1}{256} \int_{|\omega| < 2k} \frac{4}{\pi^2} \frac{1}{|\omega|^2 (4k^2 - |\omega|^2)} \left|\widehat{\sigma^2}(\omega)\right|^2 \mathrm{d}\omega \\
		&\geqslant \frac{1}{128 \pi^2 k^4} \int_{\mathbb{R}^2} \left|\widehat{\sigma^2}(\omega)\right|^2 \mathrm{d}\omega.
	\end{aligned}
	\]
	By Parseval's identity, this implies
	\[
	\|\sigma^2(x)\|_{L^2(\Omega_0)} \leqslant C'_k \|\mathbf{Var}(\operatorname{Im}(u(\cdot,k)))\|_{L^2(\mathbb{R}^2)},
	\]
	where
	\[
	\tilde{C}_k = 8 \sqrt{2} \pi k^2.
	\]
\end{proof}
\begin{rem}
	It shows that the norm of \( \sigma^2(x) \) can be controlled by the norm of \( \mathbf{Var}(\operatorname{Im}(u(x,k)) \) in \( L^2(\mathbb{R}^2) \). However, the same conclusion does not directly apply to $\mathbf{Var}(\operatorname{Re}(u(x,k))$ and $\mathbf{Cov}(\operatorname{Re}(u(x,k),\operatorname{Im}(u(x,k))$. In addition, it should be noted that the right-hand sides of stability estimates (\textcolor{blue}{\ref{sta_g1}}) and (\textcolor{blue}{\ref{sta_s1}}) are based on statistical data of phase-scattered fields in the whole space, rather than data from measurement curve (circle). Therefore, we cannot derive the stability for phaseless IRSP from the stability estimates of the phase retrieval formulas. Unless the scattered field exhibits exceptional regularity, such as analyticity, we cannot control the whole space data using the data from low-dimensional measurement curve. However, the scattered fields induced by random sources often lack such regularity. Thus, we need to develop new methods to obtain stability estimates for IRSP with phaseless statistical data, which will be an important focus for us in the future.
\end{rem}

\section{Bayesian inference}\label{4}
Bayesian inference has become a powerful framework for solving inverse problems, as it allows for the quantification of uncertainty in the inferred parameters \textcolor{blue}{\cite{Wang2004, Kaipio2005}}. Consider the inverse problem of finding $\phi$ from $\psi$, where the relationship between $\phi$ and $\psi$ is given by the model:
\begin{equation}\label{bayesian model}
	\psi=\mathcal{T}(\phi)+\eta,
\end{equation}
here, $\mathcal{T}: X \to \mathbb{R}^{m}$ is a measurable mapping, referred to as the forward operator. $X$ is a separable Hilbert space with inner product $\langle\cdot ,\cdot \rangle_{ X }$. $\phi\in X$ is the unknown function. The observed data $\psi\in\mathbb{R}^{m}$ is finite-dimensional. The noise $\eta$ follows an $m$-dimensional Gaussian noise with zero mean and covariance operator as $\Sigma$, i.e., $\eta\thicksim N(0,\Sigma)$.

The Bayesian approach relies on deriving the posterior distribution of the unknown parameter
$\phi$ based on a prior distribution and the observed data, with the forward model connecting the parameter space to the data space. Let $\mu_{pr}$ be the prior measure on $\phi$. The posterior distribution of $\phi$, conditioned on the observed data $\psi$, is given by the measure $\mu^\psi$, which satisfies the following relation:

\begin{equation}\label{HybridPriorMeasure}
	\frac{d\mu^{\psi}}{d\mu_{pr}}(\phi) \propto\exp(-\Psi(\phi)),
\end{equation}
where the left hand side of \textcolor{blue}{\eqref{HybridPriorMeasure}} is the Radon-Nikodym derivative of the posterior
distribution $\mu^\psi$ with respect to the prior $\mu_{pr}$ and
\begin{equation}\label{potential}
	\Psi(\phi):=\frac12\big\|\mathcal{T}(\phi)-\psi\big\|^2_\Sigma=\frac12\big\|\Sigma^{-1/2}(\mathcal{T}(\phi)-\psi)\big\|_2^2,
\end{equation}
which is commonly referred to as the data fidelity term in deterministic inverse problems.

It is well known that Markov Chain Monte Carlo (MCMC) methods are widely used in Bayesian inference to explore the posterior distribution. In our work, we implement the Preconditioned Crank-Nicolson (pCN) algorithm to generate samples from the posterior distribution $\mu^\Psi$ given by equation \textcolor{blue}{\eqref{HybridPriorMeasure}}. This method, originally developed in \textcolor{blue}{\cite{Stuart2015, Stuart2013, Stuart2010}}, is particularly advantageous due to its dimension-independent properties.

\subsection{Well-posedness}\label{well-poseness}
In this section, we discuss the well-posedness of the posterior distribution arising from the Bayesian inference with Gaussian prior. The regularity properties of Green's function play an important role in the subsequent analysis.

\begin{lem}\label{G_Lp}
	Let $\Omega_0\subset \mathbb{R}^2$ be a bounded domain. It holds that for any $p>0$, $G_k(x,y)\in L^p(\Omega_0)$ for any $y\in \Omega_0$.
\end{lem}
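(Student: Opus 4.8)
The plan is to reduce the claim to the classical fact that a logarithmic singularity is $L^p$-integrable in the plane for every finite $p$. Recall $G_k(x,y)=-\tfrac{\mathrm{i}}{4}H_0^{(1)}(k|x-y|)$ with $H_0^{(1)}=J_0+\mathrm{i}Y_0$, where $J_0$ is entire and uniformly bounded on $[0,\infty)$, while $Y_0$ is smooth on $(0,\infty)$ and has the small-argument behaviour
\[
Y_0(t)=\frac{2}{\pi}\Big(\ln\frac{t}{2}+\gamma\Big)+O\big(t^2\ln t\big),\qquad t\to 0^+,
\]
$\gamma$ being the Euler--Mascheroni constant. First I would combine these facts with the continuity of $H_0^{(1)}$ on $(0,\infty)$ to produce constants $C_1,C_2>0$, depending only on $k$ and $\operatorname{diam}(\Omega_0)$, such that
\[
|G_k(x,y)|\leqslant C_1+C_2\,\big|\ln|x-y|\big|,\qquad x,y\in\Omega_0,\ x\neq y.
\]
Away from the diagonal the right-hand side is harmless, while near $x=y$ it captures exactly the logarithmic blow-up of the Hankel function.

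Next, fixing $y\in\Omega_0$ and $p>0$ and choosing $R_0>0$ with $\Omega_0\subset B(y,R_0)$, I would estimate, using $(a+b)^p\leqslant c_p(a^p+b^p)$ with $c_p=\max\{1,2^{p-1}\}$,
\[
\int_{\Omega_0}|G_k(x,y)|^p\,\mathrm{d}x
\leqslant c_p\Big(C_1^p\,|\Omega_0|+C_2^p\int_{B(y,R_0)}\big|\ln|x-y|\big|^p\,\mathrm{d}x\Big),
\]
and then pass to polar coordinates centred at $y$ to rewrite the remaining integral as $2\pi\int_0^{R_0} r\,|\ln r|^p\,\mathrm{d}r$. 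This one-dimensional integral is finite for every $p>0$, since the integrand is continuous on $(0,R_0]$ and $r\,|\ln r|^p\to 0$ as $r\to 0^+$; an explicit bound follows, e.g., from the substitution $r=e^{-s}$ together with finiteness of $\int s^p e^{-2s}\,\mathrm{d}s$. This gives $G_k(\cdot,y)\in L^p(\Omega_0)$, which is the assertion.

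There is no serious obstacle in this argument. The only point deserving a little care is the invocation of the correct small-argument expansion of $Y_0$ (equivalently, the logarithmic singularity of $H_0^{(1)}$ at the origin) to justify the pointwise bound; the rest is a routine change of variables. I would also note that $C_1,C_2,R_0$ can be chosen uniformly over $y\in\Omega_0$, so that in fact $\sup_{y\in\Omega_0}\|G_k(\cdot,y)\|_{L^p(\Omega_0)}<\infty$, which is the form used in the subsequent well-posedness analysis of the posterior.
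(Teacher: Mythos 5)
Your argument is correct and follows essentially the same route as the paper: both isolate the logarithmic singularity of $G_k$ near the diagonal (the paper writes $G_k(x,y)=-\tfrac{1}{2\pi}\log\tfrac{1}{|x-y|}+V(x,y)$ with $V$ continuous, you derive the equivalent pointwise bound from the small-argument expansion of $Y_0$) and then reduce to the finiteness of $\int_0^{\rho} r\,|\log r|^p\,\mathrm{d}r$ via polar coordinates. The uniformity in $y$ you note at the end is a harmless and correct bonus observation.
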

\begin{proof}
	Set $\rho = \sup_{x,y \in \Omega_0} |x - y|$, it implies $\bar{\Omega}_0 \subset B_{\rho}(y)$. Notice that
	\[
	G_k(x, y) = -\frac{1}{2\pi} \log \frac{1}{|x - y|} + V(x, y),
	\]
	here $V$ is a continuous function.
	
	For any $p>0$, it follows that
	\[
	\int_{\Omega_0} \left| \log \frac{1}{|x - y|} \right|^p \, dx \leq \int_{B_{\rho}(y)} \left| \log \frac{1}{|x - y|} \right|^p \, dx \leqslant C \int_0^{\rho} r \left| \log \frac{1}{r} \right|^p\, dr < \infty.
	\]
	This completes the proof.
\end{proof}

We now derive the forward operator equation.
By combining equations \textcolor{blue}{(\ref{Ef})}, \textcolor{blue}{(\ref{Ereu})}, and \textcolor{blue}{(\ref{Eimu})}, we can obtain
\begin{equation}\label{exact_forward_f}
	T_{\ell,k}(g)=f_{\ell,k},\ \ell=1,2,
\end{equation}
where
$$T_{\ell,k}(g) = (T_{1,\ell,k}(g), T_{2,\ell,k}(g),\cdots,T_{m,\ell,k}(g))^\top,f_{\ell,k} = (f_{1,\ell,k},f_{2,\ell,k},\cdots,f_{m,\ell,k})^\top,$$
with
$$T_{j,\ell,k}(g) = Y_0(kr_{j,\ell})\int_{\Omega_0}\mathrm{Re}G_k(x,y)g(y)\mathrm{d}y-J_0(kr_{j,\ell})\int_{\Omega_0}\mathrm{Im}G_k(x,y)g(y)\mathrm{d}y$$
and
$$f_{j,\ell,k} = 2\left(\mathbf{E}(|v_{j,\ell,k}|^{2})-\mathbf{E}(|u_{j,k}|^{2})\right)-\frac{1}{8}\left|H_{0}^{(1)}(kr_{j,\ell})\right|^{2}.$$
Hence the noisy measured data $f_{\ell,k}$ are generated by
\begin{equation}\label{err_forward_f}
	f^\epsilon_{\ell,k} = T_{\ell,k}(g) + \eta_{\ell,k},\ \ell=1,2,
\end{equation}
where $\eta_{\ell,k} \thicksim \mathcal{N}(0,\Sigma_{\ell,k})$ .

The forward operator $T_{\ell,k}:L^{2}(\Omega_0)\rightarrow \mathbb{R}^m$ for the inversion of $g$ needs to satisfy the following lemma as in \textcolor{blue}{\cite{Stuart2010}}:
\begin{lem}\label{forward operator assume1} \ \\
	(i) For every $\delta > 0$ there exists $M_{\ell,k}=M_{\ell,k}(\delta) \in \mathbb{R}$ such that, for all $g \in L^{2}(\Omega_0)$,
	$$\|T_{\ell,k}(g) \|_{\Sigma_{\ell,k}}\leqslant \exp (\delta \|g\|^{2}_{L^2(\Omega_0)}+M_{\ell,k}),$$
	(ii) For every $ r>0$, there exists $K_{\ell,k}=K_{\ell,k}(r)>0$ such that, for all $g_1,g_2 \in L^2(\Omega_0)$ with $$\max\left\{\|g_1\|_{L^2(\Omega_0)},\|g_2\|_{L^2(\Omega_0)}\right\}<r,$$
	we have$$\big\|T_{\ell,k}(g_1) -T_{\ell,k}(g_2) \big\|_{\Sigma_{\ell,k}}\leqslant K_{\ell,k}\|g_1-g_2\|_{L^2(\Omega_0)}.$$
\end{lem}
\begin{proof}
	
	Throughout the proof, the constant $C_{\ell,k}$ changes from occurrence to occurrence.
	For any $g\in L^2(\Omega_0)$, according to Triangle inequality, Cauchy-Schwarz inequality, Lemma \textcolor{blue}{\ref{lemma:J0Y0}} and Lemma \textcolor{blue}{\ref{G_Lp}}, we estimate
	\begin{equation}\label{T1}
		\begin{aligned}
			&\|T_{\ell,k}(g)\|_{\Sigma_{\ell,k}}\\\leqslant& C_{\ell,k}\sum_{j=1}^m \left|Y_0(kr_{j,\ell})\int_{\Omega_0}\mathrm{Re}G_k(x,y)g(y)\mathrm{d}y-J_0(kr_{j,\ell})\int_{\Omega_0}\mathrm{Im}G_k(x,y)g(y)\mathrm{d}y\right|
			\\\leqslant &C_{\ell,k}\left[\left(\int_{\Omega_0}\left|\mathrm{Re}G_k(x,y)\right|^2\mathrm{d}y\right)^{\frac{1}{2}}\left(\int_{\Omega_0}\left(g(y)\right)^2\mathrm{d}y\right)^{\frac{1}{2}}+\left(\int_{\Omega_0}\left|\mathrm{Im}G_k(x,y)\right|^2\mathrm{d}y\right)^{\frac{1}{2}}\left(\int_{\Omega_0}\left(g(y)\right)^2\mathrm{d}y\right)^{\frac{1}{2}}\right]
			\\\leqslant&C_{\ell,k}\|g(y)\|_{L^2(\Omega_0)}
			\\\leqslant&C_{\ell,k}\exp(\|g(y)\|_{L^2(\Omega_0)}).
		\end{aligned}
	\end{equation}
	From Young's inequality, for every $\delta>0$, it follows that
	\begin{equation}\label{T2}
		\begin{aligned}
			\exp(\|g(y)\|_{L^2(\Omega_0)})\leqslant\exp\left(\frac{1}{2\delta}+\frac{\delta \|g(y)\|^2_{L^2(\Omega_0)}}{2}\right).
	\end{aligned}\end{equation}
	The combination of \textcolor{blue}{\eqref{T1}} and \textcolor{blue}{\eqref{T2}} yield
	$$\|T_{\ell,k}(g) \|_{\Sigma_{\ell,k}}\leqslant \exp (\delta \|g\|^{2}_{L^2(\Omega_0)}+M_{\ell,k}).$$
	Using the linearity of $T_{\ell,k}$, we can easily obtain
	$$\|T_{\ell,k}(g_1)-T_{\ell,k}(g_2)\|_{\Sigma_{\ell,k}}\leqslant K_{\ell,k}\|g_1(y)-g_2(y)\|_{L^2(\Omega_0)}.$$
	This completes the proof.
\end{proof}

\begin{defn}[\textcolor{blue}{\cite{Stuart2010}}]
	Assume that two probability measures $\mu$ and $\mu'$ both absolutely continuous with respect to the same reference measure $\nu$, then Hellinger metric with respect to measure $\mu$ and $\mu'$ is defined by
	$$d_{Hell}(\mu, \mu')=\sqrt{\frac12\int_{\Omega_0}\big(\sqrt{\frac{d\mu}{d\nu}}-\sqrt{\frac{d\mu'}{d\nu}}\big)^2 d\nu}.$$
\end{defn}

The Lemma \textcolor{blue}{\ref{forward operator assume1}} about $T_{\ell,k}$ can derive the bounds and Lipschitz properties of the corresponding data fidelity term $\Psi_{\ell,k}$ as Assumptions 2.6 in \textcolor{blue}{\cite{Stuart2010}}.
As a result, the probability measure $\mu^{f^\epsilon_{\ell,k}}$ defined by equation \textcolor{blue}{\eqref{HybridPriorMeasure}} is well defined on $L^2(\Omega_0)$ and it is Lipschitz continuous with respect to the data $f^\epsilon_{\ell,k}$ under the Hellinger metric, as stated in the following theorem.

\begin{thm}\label{welldefined1}
	Let forward operator $T_{\ell,k}:L^2(\Omega_0)\rightarrow \mathbb{R}^m$ satisfies Lemma \ref{forward operator assume1}. For a given $f^\epsilon_{\ell,k}\in \mathbb{R}^m$,  $\mu^{f^\epsilon_{\ell,k}}$ is given by equation (\ref{HybridPriorMeasure}). Then we have the following:\\
	$(i)$ $\mu^{f^\epsilon_{\ell,k}}$ defined as equation (\ref{HybridPriorMeasure}) is well defined on $L^2(\Omega_0)$.\\
	$(ii)$ $\mu^{f^\epsilon_{\ell,k}}$ is Lipschitz in the data $f^\epsilon_{\ell,k}$ with respect to the Hellinger metric. Specifically, if $\mu^{f^\epsilon_{\ell,k}}$ and $\mu^{{f_{\ell,k}^{\prime, \epsilon}}}$ are two measures corresponding to data $f^\epsilon_{\ell,k}$ and ${f_{\ell,k}^{\prime, \epsilon}}$ respectively, then for every $r>0$, there exists $C_{\ell,k}=C_{\ell,k}(r)>0$ such that for all $f^\epsilon_{\ell,k}$, ${f_{\ell,k}^{\prime, \epsilon}}\in \mathbb{R}^m$ with $\max\left\{\|f^\epsilon_{\ell,k}\|_{L^2(\Omega_0)},\|{f_{\ell,k}^{\prime, \epsilon}}\|_{L^2(\Omega_0)}\right\}<r$, we have $$d_{Hell}(\mu^{f^\epsilon_{\ell,k}},\mu^{{f_{\ell,k}^{\prime, \epsilon}}}) \leqslant C_{\ell,k} \|f^\epsilon_{\ell,k}-{f_{\ell,k}^{\prime, \epsilon}}\|_{\Sigma_{\ell,k}}.$$

	
\end{thm}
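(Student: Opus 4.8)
The plan is to invoke the general Bayesian well-posedness framework of \textcolor{blue}{\cite{Stuart2010}}, so the whole argument reduces to verifying that the data fidelity term $\Psi_{\ell,k}(g):=\frac12\|\Sigma_{\ell,k}^{-1/2}(T_{\ell,k}(g)-f^\epsilon_{\ell,k})\|_2^2$ satisfies the standard bounds and local Lipschitz conditions (Assumptions~2.6 of \textcolor{blue}{\cite{Stuart2010}}), after which parts $(i)$ and $(ii)$ follow verbatim from Theorems~4.1 and~4.2 there. The key input is Lemma~\textcolor{blue}{\ref{forward operator assume1}}, which is already established: it gives the exponential growth bound $\|T_{\ell,k}(g)\|_{\Sigma_{\ell,k}}\leqslant \exp(\delta\|g\|_{L^2(\Omega_0)}^2+M_{\ell,k})$ for every $\delta>0$ and the local Lipschitz bound $\|T_{\ell,k}(g_1)-T_{\ell,k}(g_2)\|_{\Sigma_{\ell,k}}\leqslant K_{\ell,k}\|g_1-g_2\|_{L^2(\Omega_0)}$ (in fact global Lipschitz, since $T_{\ell,k}$ is linear and bounded by Lemma~\textcolor{blue}{\ref{G_Lp}}).

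First I would translate the two parts of Lemma~\textcolor{blue}{\ref{forward operator assume1}} into the corresponding properties of $\Psi_{\ell,k}$. For the lower bound and measurability one uses that $\Psi_{\ell,k}(g)\geqslant 0$ and, choosing $\delta$ small, $\Psi_{\ell,k}(g)\geqslant -M$ for a constant $M=M(r,\delta)$ uniformly on balls $\|g\|_{L^2(\Omega_0)}<r$; this is immediate from the triangle inequality $\Psi_{\ell,k}(g)\geqslant \tfrac14\|\Sigma_{\ell,k}^{-1/2}f^\epsilon_{\ell,k}\|_2^2-\tfrac12\|T_{\ell,k}(g)\|_{\Sigma_{\ell,k}}^2$ together with the growth bound. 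For the Lipschitz dependence on $g$ on bounded sets, one expands $\Psi_{\ell,k}(g_1)-\Psi_{\ell,k}(g_2)$ as a difference of squares and controls it by $\big(\|T_{\ell,k}(g_1)\|_{\Sigma_{\ell,k}}+\|T_{\ell,k}(g_2)\|_{\Sigma_{\ell,k}}+\|f^\epsilon_{\ell,k}\|_{\Sigma_{\ell,k}}\big)\,\|T_{\ell,k}(g_1)-T_{\ell,k}(g_2)\|_{\Sigma_{\ell,k}}$, which on $\|g_i\|_{L^2}<r$ is $\leqslant C(r)\,\|g_1-g_2\|_{L^2(\Omega_0)}$. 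Likewise, for the Lipschitz dependence on the data one has $|\Psi_{\ell,k}(g;f^\epsilon_{\ell,k})-\Psi_{\ell,k}(g;f^{\prime,\epsilon}_{\ell,k})|\leqslant C(r)\big(1+\|T_{\ell,k}(g)\|_{\Sigma_{\ell,k}}\big)\|f^\epsilon_{\ell,k}-f^{\prime,\epsilon}_{\ell,k}\|_{\Sigma_{\ell,k}}$, where the factor $1+\|T_{\ell,k}(g)\|_{\Sigma_{\ell,k}}$ is integrable against the Gaussian prior $\mu_{pr}$ because of the Fernique theorem (the prior has finite Gaussian moments, and $\|T_{\ell,k}(g)\|_{\Sigma_{\ell,k}}$ grows at most linearly in $\|g\|_{L^2(\Omega_0)}$ since $T_{\ell,k}$ is linear bounded).

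With these properties in hand, part $(i)$ follows: the normalizing constant $Z=\int_{L^2(\Omega_0)}\exp(-\Psi_{\ell,k}(g))\,\mu_{pr}(dg)$ is finite (since $\Psi_{\ell,k}\geqslant 0$, so $Z\leqslant 1$) and strictly positive (by the lower bound on $\Psi_{\ell,k}$ restricted to a ball of positive $\mu_{pr}$-measure, using that $\mu_{pr}$ is a nondegenerate Gaussian on $L^2(\Omega_0)$), so $\mu^{f^\epsilon_{\ell,k}}$ is a well-defined probability measure absolutely continuous with respect to $\mu_{pr}$. For part $(ii)$ I would estimate the Hellinger distance directly: writing $Z$ and $Z'$ for the two normalizing constants, $d_{Hell}(\mu^{f^\epsilon_{\ell,k}},\mu^{f^{\prime,\epsilon}_{\ell,k}})^2$ splits into two terms, one controlled by $Z^{-1}\int\big(\exp(-\tfrac12\Psi_{\ell,k}(g;f^\epsilon_{\ell,k}))-\exp(-\tfrac12\Psi_{\ell,k}(g;f^{\prime,\epsilon}_{\ell,k}))\big)^2\mu_{pr}(dg)$ and the other by $|Z^{-1/2}-Z'^{-1/2}|^2$; using $|e^{-a}-e^{-b}|\leqslant\tfrac12|a-b|$ for $a,b\geqslant0$, the Lipschitz-in-data bound on $\Psi_{\ell,k}$, and the integrability of $1+\|T_{\ell,k}(g)\|_{\Sigma_{\ell,k}}$ against $\mu_{pr}$, both terms are bounded by $C_{\ell,k}(r)\,\|f^\epsilon_{\ell,k}-f^{\prime,\epsilon}_{\ell,k}\|_{\Sigma_{\ell,k}}^2$, giving the claimed Lipschitz estimate. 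The main obstacle, such as it is, is bookkeeping: making sure the constant $C_{\ell,k}(r)$ depends only on $r$ and the fixed data ($k$, $R$, $m$, $\Sigma_{\ell,k}$, the prior), and that the Gaussian integrability used to pass the $g$-dependent prefactors through the $\mu_{pr}$-integral is correctly invoked — this is exactly where linearity of $T_{\ell,k}$ and the Fernique theorem are essential, and it is the only place where the structure of the present problem (as opposed to the abstract framework) enters. Since all of this is standard once Lemma~\textcolor{blue}{\ref{forward operator assume1}} is available, the proof is a direct citation of \textcolor{blue}{\cite{Stuart2010}}.
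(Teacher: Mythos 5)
Your proposal is correct and follows exactly the route the paper takes: the paper's entire ``proof'' is the one-line remark that $\Psi_{\ell,k}$ satisfies Assumptions~2.6 of \cite{Stuart2010} (as guaranteed by Lemma~\ref{forward operator assume1}) so that the conclusion follows from the general well-posedness theorems there, and your write-up simply supplies the standard details (verification of the bounds on $\Psi_{\ell,k}$, positivity and finiteness of the normalizing constant, and the Hellinger estimate via Fernique) that this citation encapsulates. No gaps.
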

The above theorem is a direct consequence of the fact that $\Psi_{\ell,k}$ satisfies the Assumptions 2.6 in \textcolor{blue}{\cite{Stuart2010}}, and therefore we omit the proof here.

Combining equations \textcolor{blue}{(\ref{Vf})}, \textcolor{blue}{(\ref{varre1})}, \textcolor{blue}{(\ref{varim1})} and \textcolor{blue}{(\ref{covreim1})}, we obtain the following representation
\begin{equation}\label{exact_forward_F}
	T_{k,\ell_1,\ell_2}(\sigma^2)=F_{k,\ell_1,\ell_2},\ell_1=1,2,\ell_2=1,2,
\end{equation}
where
$$T_{k,\ell_1,\ell_2}(\sigma^2) = \left(T_{1,k,\ell_1,\ell_2,}(\sigma^2),T_{2,k,\ell_1,\ell_2,}(\sigma^2),\cdots,T_{k,\ell_1,\ell_2,}(\sigma^2)\right)^{\top}, F_{k,\ell_1,\ell_2} = \left(F_{1,k,\ell_1,\ell_2},F_{2,k,\ell_1,\ell_2},\cdots,F_{m,k,\ell_1,\ell_2}\right)^{\top},$$
with
\begin{equation}
	\begin{aligned}
		T_{j,k,\ell_1,\ell_2}(\sigma^2) =& Y_0(kr_{j,\ell_1})Y_0(kr_{j,\ell_2})\int_{\Omega_0}|\mathrm{Re}G_k(x,y)|^2\sigma^2(y)\mathrm{d}y+J_0(kr_{j,\ell_1})J_0(kr_{j,\ell_2})\int_{\Omega_0}|\mathrm{Im}G_k(x,y)|^2\sigma^2(y)\mathrm{d}y\\&-\left(Y_0(kr_{j,\ell_1})J_0(kr_{j,\ell_2})+Y_0(kr_{j,\ell_2})J_0(kr_{j,\ell_1})\right)\int_{\Omega_0}\mathrm{Re}G_k(x,y)\mathrm{Im}G_k(x,y)\sigma^2(y)\mathrm{d}y
	\end{aligned}
	\notag
\end{equation}
and
\begin{equation}
	\begin{aligned}
		F_{j,k,\ell_1,\ell_2} =4[\mathbf{Cov}(|v_{j,\ell_1,k}|^2,|v_{j,\ell_2,k}|^2)-\mathbf{Cov}(|u_{j,k}|^2,|v_{j,\ell_1,k}|^2)-\mathbf{Cov}(|u_{j,k}|^2,|v_{j,\ell_2,k}|^2)+\mathbf{Var}(|u_{j,k}|^2)].
	\end{aligned}
	\notag
\end{equation}
Hence the noisy measured data $F_{k,\ell_1,\ell_2}$ are generated by
\begin{equation}\label{err_forward_F}
	F^\epsilon_{k,\ell_1,\ell_2}=T_{k,\ell_1,\ell_2}(\sigma^2)+\eta_{k,\ell_1,\ell_2},\ \ell_1=1,2,\ \ell_2=1,2.
\end{equation}
where $\eta_{k,\ell_1,\ell_2} \thicksim \mathcal{N}(0,\Sigma_{k,\ell_1,\ell_2})$.
The forward operator $T_{k,\ell_1,\ell_2}:L^{2}(\Omega_0)\rightarrow \mathbb{R}^m$ for the inversion of $\sigma^2$ needs to satisfy the following lemma as in \textcolor{blue}{\cite{Stuart2010}}.
The proof follows a similar procedure as in Lemma \textcolor{blue}{\ref{forward operator assume1}}, thus we omit it here.
\begin{lem}\label{forward operator assume} \ \\
	(i) For every $\delta > 0$ there exists $M_{k,\ell_1,\ell_2}=M_{k,\ell_1,\ell_2}(\delta) \in \mathbb{R}$ such that, for all $\sigma^2\in L^{2}(\Omega_0)$,
	$$\|T_{k,\ell_1,\ell_2}(\sigma^2) \|_{\Sigma_{k,\ell_1,\ell_2}}\leqslant \exp (\delta \|\sigma^2\|^{2}_{L^2(\Omega_0)}+M_{k,\ell_1,\ell_2}),$$
	(ii)For every $ r>0$, there exists $K_{k,\ell_1,\ell_2}=K_{k,\ell_1,\ell_2}(r)>0$ such that, for all $\sigma^2_1,\sigma^2_2 \in L^2(\Omega_0)$ with $$\max\left\{\|\sigma^2_1\|_{L^2(\Omega_0)},\|\sigma^2_2\|_{L^2(\Omega_0)}\right\}<r,$$
	we have$$\big\|T_{k,\ell_1,\ell_2}(\sigma^2_1) - T_{k,\ell_1,\ell_2}(\sigma^2_2) \big\|_{\Sigma_{k,\ell_1,\ell_2}}\leqslant K_{k,\ell_1,\ell_2}\|\sigma^2_1-\sigma^2_2\|_{L^2(\Omega_0)}.$$
\end{lem}
Lemma \textcolor{blue}{\ref{forward operator assume}} imply that the forward operator $T_{k,\ell_1,\ell_2}$ is bounded and Lipschitz continuous, similar as Assumptions 2.6 in \textcolor{blue}{\cite{Stuart2010}}.
Consequently, the probability measure $\mu^{F^\epsilon_{k,\ell_1,\ell_2}}$ defined by equation \textcolor{blue}{\eqref{HybridPriorMeasure}} is well-defined on $L^2(\Omega_0)$ and is Lipschitz in the data $F^\epsilon_{k,\ell_1,\ell_2}$ with respect to the Hellinger metric, as shown in the following theorem.
\begin{thm}\label{welldefined}
	Let forward operator $T_{k,\ell_1,\ell_2}:L^2(\Omega_0)\rightarrow \mathbb{R}^m$ satisfies Lemma \ref{forward operator assume}. For a given $F^\epsilon_{k,\ell_1,\ell_2}\in \mathbb{R}^m$,  $\mu^{F^\epsilon_{k,\ell_1,\ell_2}}$ is given by equation (\ref{HybridPriorMeasure}). Then we have the following:\\
	$(i)$ $\mu^{F^\epsilon_{k,\ell_1,\ell_2}}$ defined as equation (\ref{HybridPriorMeasure}) is well defined on $L^2(\Omega_0)$.\\
	$(ii)$ $\mu^{F^\epsilon_{k,\ell_1,\ell_2}}$ is Lipschitz in the data $F^\epsilon_{k,\ell_1,\ell_2}$ with respect to the Hellinger metric. Specifically, if $\mu^{F^\epsilon_{k,\ell_1,\ell_2}}$ and $\mu^{{F_{k,\ell_1,\ell_2}^{\prime, \epsilon}}}$ are two measures corresponding to data $F^\epsilon_{k,\ell_1,\ell_2}$ and $F_{k,\ell_1,\ell_2}^\prime$ respectively, then for every $r>0$, there exists $C_{k,\ell_1,\ell_2}=C_{k,\ell_1,\ell_2}(r)>0$ such that $\max\left\{\|F^\epsilon_{k,\ell_1,\ell_2}\|_{L^2(\Omega_0)},\|{F_{k,\ell_1,\ell_2}^{\prime, \epsilon}}\|_{L^2(\Omega_0)}\right\}<r$ for all $F^\epsilon_{k,\ell_1,\ell_2}$, ${F_{k,\ell_1,\ell_2}^{\prime, \epsilon}}\in \mathbb{R}^m$, we have $$d_{Hell}(\mu^{F^\epsilon_{k,\ell_1,\ell_2}},\mu^{{F_{k,\ell_1,\ell_2}^{\prime, \epsilon}}}) \leqslant C_{k,\ell_1,\ell_2}\|F^\epsilon_{k,\ell_1,\ell_2}-{F_{k,\ell_1,\ell_2}^{\prime, \epsilon}}\|_{\Sigma}.$$
\end{thm}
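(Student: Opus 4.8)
The plan is to reduce Theorem~\ref{welldefined} to the abstract Bayesian well-posedness theory of \cite{Stuart2010}, in exactly the same way that Theorem~\ref{welldefined1} was obtained for the inversion of $g$. Concretely, setting
\[
\Psi_{k,\ell_1,\ell_2}(\sigma^2):=\tfrac12\big\|\Sigma_{k,\ell_1,\ell_2}^{-1/2}\big(T_{k,\ell_1,\ell_2}(\sigma^2)-F^\epsilon_{k,\ell_1,\ell_2}\big)\big\|_2^2,
\]
I would first verify that $\Psi_{k,\ell_1,\ell_2}$ satisfies the four structural conditions of Assumptions~2.6 in \cite{Stuart2010} (a lower bound, a local upper bound, local Lipschitz continuity in $\sigma^2$, and local Lipschitz continuity in the data), deriving each from the two estimates of Lemma~\ref{forward operator assume}; claims $(i)$ and $(ii)$ then follow from the general theory of \cite{Stuart2010}. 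Note that $\Sigma_{k,\ell_1,\ell_2}$ is a fixed positive-definite $m\times m$ matrix, so $\|\cdot\|_{\Sigma_{k,\ell_1,\ell_2}}$ and $\|\cdot\|_2$ are equivalent on $\mathbb{R}^m$ and all constants below are finite.

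\textbf{Step 1 (potential estimates).} The lower bound is trivial since $\Psi_{k,\ell_1,\ell_2}\geqslant 0$. For the local upper bound, fix $r>0$; if $\|\sigma^2\|_{L^2(\Omega_0)}<r$ and $\|F^\epsilon_{k,\ell_1,\ell_2}\|_2<r$, then the triangle inequality and part~$(i)$ of Lemma~\ref{forward operator assume} (with $\delta=1$) give
\[
\big\|\Sigma_{k,\ell_1,\ell_2}^{-1/2}\big(T_{k,\ell_1,\ell_2}(\sigma^2)-F^\epsilon_{k,\ell_1,\ell_2}\big)\big\|_2\leqslant\|T_{k,\ell_1,\ell_2}(\sigma^2)\|_{\Sigma_{k,\ell_1,\ell_2}}+C_{k,\ell_1,\ell_2}r\leqslant e^{r^2+M_{k,\ell_1,\ell_2}}+C_{k,\ell_1,\ell_2}r,
\]
so $\Psi_{k,\ell_1,\ell_2}$ is bounded on bounded sets. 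For the Lipschitz estimates, use the identity $\tfrac12\|a\|_2^2-\tfrac12\|b\|_2^2=\tfrac12\langle a-b,a+b\rangle_2$: taking $a,b$ to be $\Sigma_{k,\ell_1,\ell_2}^{-1/2}(T_{k,\ell_1,\ell_2}(\sigma_i^2)-F^\epsilon_{k,\ell_1,\ell_2})$, $i=1,2$, part~$(ii)$ of Lemma~\ref{forward operator assume} controls $\|a-b\|_2$, while $\|a+b\|_2$ is bounded on the ball of radius $r$ by the display above; this yields local Lipschitz continuity of $\Psi_{k,\ell_1,\ell_2}$ in $\sigma^2$. The same computation with $\sigma^2$ held fixed and $F^\epsilon_{k,\ell_1,\ell_2}$ varying gives local Lipschitz continuity in the data.

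\textbf{Step 2 (well-definedness).} The normalizing constant $Z_{k,\ell_1,\ell_2}:=\int_{L^2(\Omega_0)}\exp(-\Psi_{k,\ell_1,\ell_2}(\sigma^2))\,\mathrm{d}\mu_{pr}(\sigma^2)$ satisfies $Z_{k,\ell_1,\ell_2}\leqslant 1<\infty$ because $\exp(-\Psi_{k,\ell_1,\ell_2})\leqslant 1$. For strict positivity — the one genuinely nontrivial point — I would use that the Gaussian prior $\mu_{pr}$ assigns positive mass to every centred ball of $L^2(\Omega_0)$: choosing $\rho>\|F^\epsilon_{k,\ell_1,\ell_2}\|_2$ and invoking the local upper bound from Step~1, there is $B<\infty$ with $\Psi_{k,\ell_1,\ell_2}\leqslant B$ on $\{\|\sigma^2\|_{L^2(\Omega_0)}<\rho\}$, hence $Z_{k,\ell_1,\ell_2}\geqslant e^{-B}\mu_{pr}(\{\|\sigma^2\|_{L^2(\Omega_0)}<\rho\})>0$. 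Therefore \eqref{HybridPriorMeasure} defines a genuine probability measure on $L^2(\Omega_0)$, which is $(i)$.

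\textbf{Step 3 (Hellinger stability).} Writing $\frac{d\mu^{F^\epsilon_{k,\ell_1,\ell_2}}}{d\mu_{pr}}=Z_{k,\ell_1,\ell_2}^{-1}e^{-\Psi_{k,\ell_1,\ell_2}(\cdot;F^\epsilon_{k,\ell_1,\ell_2})}$ and analogously for $F^{\prime,\epsilon}_{k,\ell_1,\ell_2}$, I would expand
\[
2\,d_{Hell}\big(\mu^{F^\epsilon_{k,\ell_1,\ell_2}},\mu^{F^{\prime,\epsilon}_{k,\ell_1,\ell_2}}\big)^2=\int_{L^2(\Omega_0)}\Big(Z_{k,\ell_1,\ell_2}^{-1/2}e^{-\frac12\Psi_{k,\ell_1,\ell_2}(\cdot;F^\epsilon_{k,\ell_1,\ell_2})}-(Z'_{k,\ell_1,\ell_2})^{-1/2}e^{-\frac12\Psi_{k,\ell_1,\ell_2}(\cdot;F^{\prime,\epsilon}_{k,\ell_1,\ell_2})}\Big)^2\mathrm{d}\mu_{pr},
\]
and split the integrand into the two standard pieces: the first is handled by $|e^{-s}-e^{-t}|\leqslant|s-t|$ for $s,t\geqslant 0$ together with the data-Lipschitz bound of Step~1; the second by $|Z_{k,\ell_1,\ell_2}-Z'_{k,\ell_1,\ell_2}|\leqslant\int_{L^2(\Omega_0)}|e^{-\Psi_{k,\ell_1,\ell_2}(\cdot;F^\epsilon_{k,\ell_1,\ell_2})}-e^{-\Psi_{k,\ell_1,\ell_2}(\cdot;F^{\prime,\epsilon}_{k,\ell_1,\ell_2})}|\,\mathrm{d}\mu_{pr}$ and the same bound, combined with the lower bounds on $Z_{k,\ell_1,\ell_2},Z'_{k,\ell_1,\ell_2}$ from Step~2. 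Collecting constants (depending only on $r$, $k$, $R$, $m$, the $\lambda_{j,\ell}$ and $\Sigma_{k,\ell_1,\ell_2}$) yields $d_{Hell}(\mu^{F^\epsilon_{k,\ell_1,\ell_2}},\mu^{F^{\prime,\epsilon}_{k,\ell_1,\ell_2}})\leqslant C_{k,\ell_1,\ell_2}\|F^\epsilon_{k,\ell_1,\ell_2}-F^{\prime,\epsilon}_{k,\ell_1,\ell_2}\|_{\Sigma_{k,\ell_1,\ell_2}}$, which is $(ii)$. I expect the only step needing real care to be the strict positivity of $Z_{k,\ell_1,\ell_2}$ — the place where the Gaussianity of $\mu_{pr}$ is indispensable and which legitimizes the Radon--Nikodym density in \eqref{HybridPriorMeasure}; everything else is a verbatim transcription of the argument behind Theorem~\ref{welldefined1}.
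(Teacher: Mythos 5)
Your proposal is correct and follows exactly the route the paper intends: the paper itself gives no argument beyond the remark that $\Psi_{k,\ell_1,\ell_2}$ satisfies Assumptions~2.6 of \cite{Stuart2010} (so that the conclusion follows from the general Bayesian well-posedness theory there), and your Steps~1--3 are precisely the standard verification of those assumptions together with the normalization-constant and Hellinger estimates that this citation encapsulates. The only detail worth flagging is that in Step~3 the data-Lipschitz constant of $\Psi_{k,\ell_1,\ell_2}$ involves $\|T_{k,\ell_1,\ell_2}(\sigma^2)\|$, so integrating it against the Gaussian prior implicitly uses the Fernique theorem (or, more simply here, the linearity and boundedness of $T_{k,\ell_1,\ell_2}$ established in the proof of Lemma~\ref{forward operator assume}); this is covered by your ``collecting constants'' but deserves a sentence.
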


\begin{rem}
Theorem \textcolor{blue}{\ref{welldefined1}} and Theorem \textcolor{blue}{\ref{welldefined}} ensure that the posterior measure (or posterior density) in the corresponding Bayesian formula is well-defined. In particular, the stability of the posterior measure with respect to the measurement data provides a theoretical foundation for the subsequent design of numerical algorithms.
\end{rem}

\section{Numerical examples}\label{5}
In the section, we present several numerical results of a two-dimensional example to evaluate the effectiveness of the proposed method. Next, we solve the IRSP using Bayesian inference methods.

First, the numerical approximation to the forward problem is obtained by discretizing the integral equations. We utilize Trapezoidal rules to approximate integrals, i.e.,
\[
\begin{aligned}
	I_1 &= \int_a^b \int_c^d g(y_1,y_2) \, dy_1 dy_2 \\&\approx \frac{\Delta y_1 \Delta y_2}{4} \sum_{i=0}^{N_{y_1}-1} \sum_{j=0}^{N_{y_2}-1} \big[
	g(y_1^{(i)}, y_2^{(j)}) + g(y_1^{(i+1)}, y_2^{(j)}) + g(y_1^{(i)}, y_2^{(j+1)}) + g(y_1^{(i+1)}, y_2^{(j+1)})
	\big],
\end{aligned}
\]
\[
\begin{aligned}
	I_2 = \int_a^b \int_c^d \sigma(y_1,y_2) \, \mathrm{d}W_y
	&\approx \frac{\Delta y_1 \Delta y_2}{4} \sum_{i=0}^{N_{y_1}-1} \sum_{j=0}^{N_{y_2}-1}  \big[
	\sigma(y_1^{(i)}, y_2^{(j)}) z_{i,j}
	+ \sigma(y_1^{(i+1)}, y_2^{(j)}) z_{i+1,j}
	\\&+ \sigma(y_1^{(i)}, y_2^{(j+1)}) z_{i,j+1}
	+ \sigma(y_1^{(i+1)}, y_2^{(j+1)}) z_{i+1,j+1}
	\big],\quad z_{ij} \sim N(0, \Delta t),
\end{aligned}
\]
where $ y_1^i = a + i \Delta y_1,i=0,1,\cdots,N_{y_1}$  and $ y_2^j = c + j \Delta y_2, j = 0,1,\cdots,N_{y_2}$ are the grid points in the \( y_1 \) and \( y_2 \) directions. $I_1$ is a deterministic integral and $I_2$ is a stochastic integral.
We producing $v_{j,\ell}$ by merely adding $u_{j,\ell}$ and $\Phi_{j,\ell}$ together. To evaluate the stability of the PR algorithm, we consider not only the discretization error of the forward solver but also add Gaussian distributed random noises to the measurements.
The noisy phaseless data is generated by the following formulas:
$$
\begin{aligned}
	\mathbf{E}^{\epsilon}(|u(x,k)|^2):&=(1+\epsilon r)\left(\mathbf{E}(|u(x,k)|^2)\right),\\
	\mathbf{Var}^{\epsilon}(|u(x,k)|^2):&=(1+\epsilon r)\left(\mathbf{Var}(|u(x,k)|^2)\right),\\
    \mathbf{Cov}^\epsilon(|v_{j,1,k}|^2,|v_{j,2,k}|^2):&=(1+\epsilon r)\left(\mathbf{Cov}(|v_{j,1,k}|^2,|v_{j,2,k}|^2)\right),\\
    \mathbf{E}^\epsilon(|v_{j,l,k}|^2):&=(1+\epsilon r)\left(\mathbf{E}(|v_{j,l,k}|^2))\right),\\
    \mathbf{Var}^\epsilon(|v_{j,l,k}|^2):&=(1+\epsilon r)\left(\mathbf{Var}(|v_{j,l,k}|^2))\right),\\
    \mathbf{Cov}^\epsilon(|u_{j,k}|^2,|v_{j,l,k}|^2):&=(1+\epsilon r)\left(\mathbf{Cov}(|u_{j,k}|^2,|v_{j,l,k}|^2)\right),\ \ \ l=1, 2,
\end{aligned}
$$
where $r$ is a  standard Gaussian distributed random number, and $\epsilon > 0$ is the noise level.

In our experiment, the test source function is chosen as the following

$$
g(x_{1},x_{2}) =0.3(1-x_{1})^{2}\mathrm{e}^{-x_{1}^{2}-(x_{2}+1)^{2}}-(0.2x_{1}-x_{1}^{3}-x_{2}^{5})\mathrm{e}^{-x_{1}^{2}-x_{2}^{2}}-0.03\mathrm{e}^{-(x_{1}+1)^{2}-x_{2}^{2}}$$
and

$$\sigma(x_{1},x_{2})=0.6\mathrm{e}^{-8(r^{3}-0.75r^{2})},\quad r=(x_{1}^{2}+x_{2}^{2})^{1/2},$$
and reconstruct the mean $g_1$ and the variance $\sigma_1$ given by   \\
$$g_1(x_1,x_2)=g(3x_1,3x_2)\quad\mathrm{and}\quad\sigma_1(x_1,x_2)=\sigma^2(x_1,x_2),$$\\
where $x_1,x_2 \in \Omega_0=[-1,1]\times[-1,1]$. See Figure \textcolor{blue}{\ref{figure_g1s1true}} for the surface plot of the exact $g_1$ and $\sigma_1$ in the domain $\Omega_0$.
\begin{figure}[H]
	\centering
	\subfigure[]{
		\label{g_value}
		\includegraphics[width=6cm]{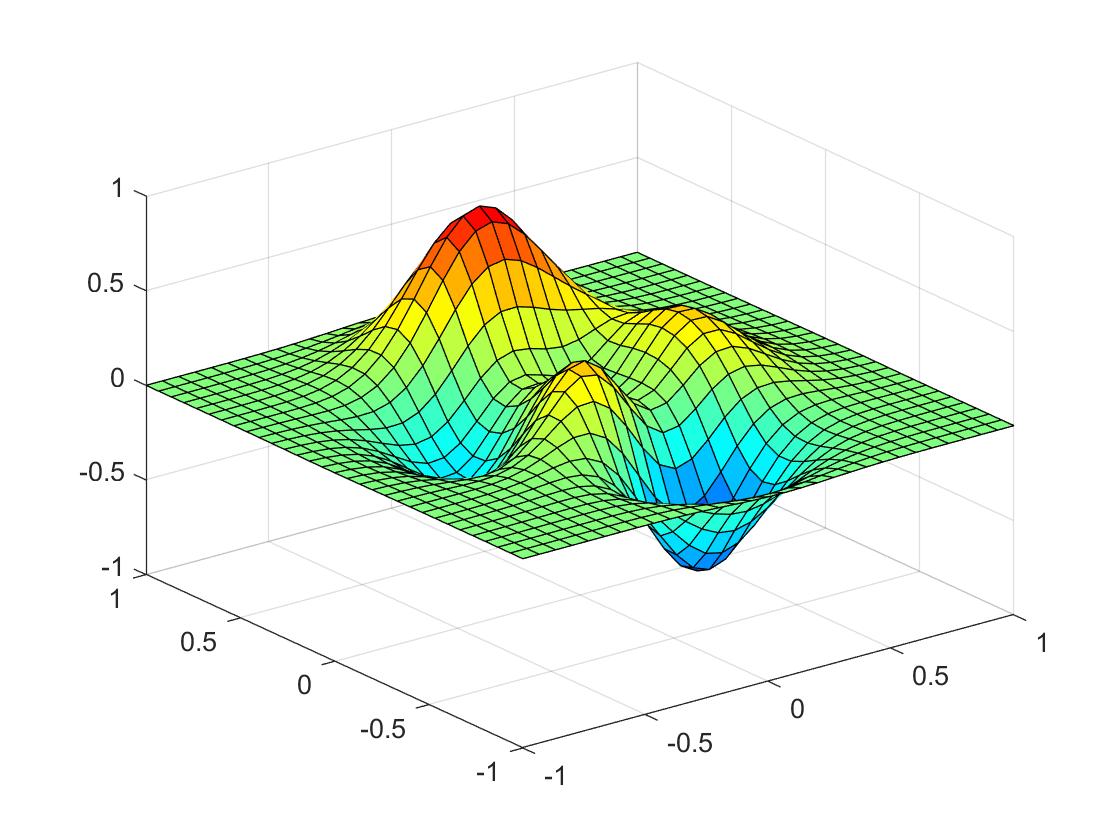}
	}
	\quad
	\subfigure[]{
		\label{s_value}
		\includegraphics[width=6cm]{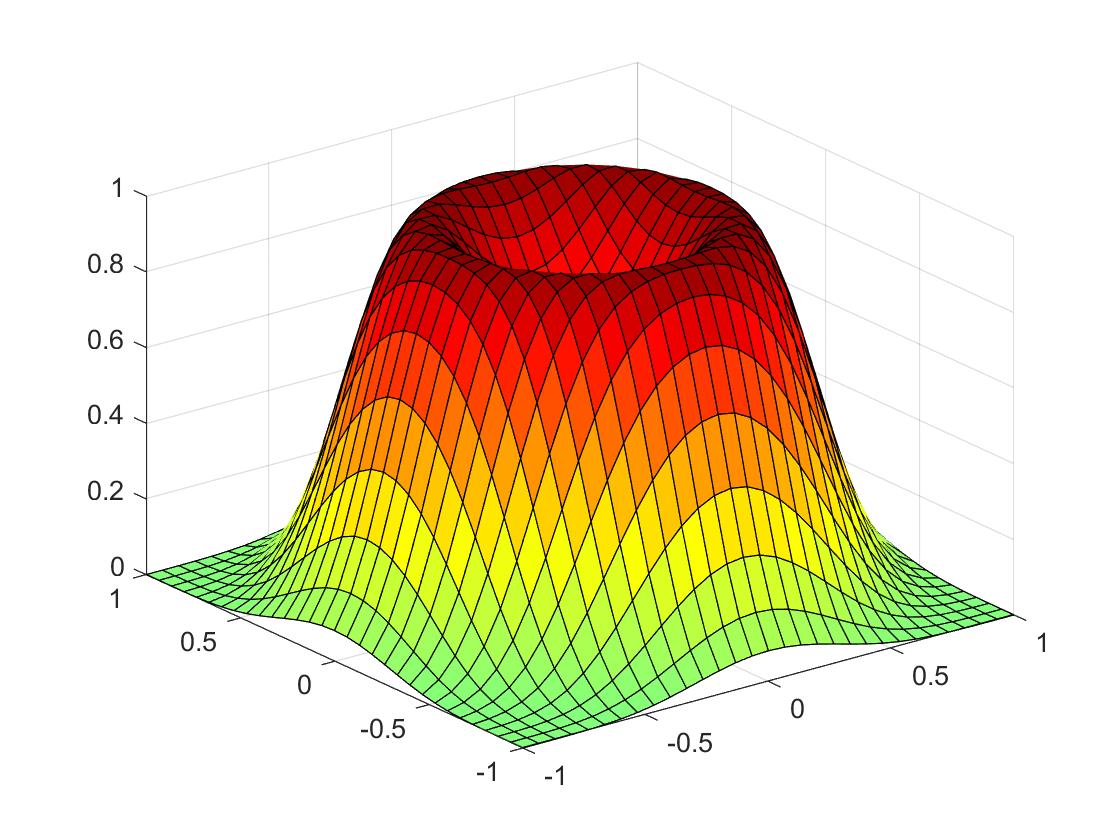}
	}
	\caption{(a) surface plot of the reconstructed mean $g_1$; (b) surface plot of the reconstructed variance $\sigma_1$.}
	\label{figure_g1s1true}
\end{figure}
\subsection{Phase retrieval}
In this section, a numerical validation of the phase retrieval technique will be shown.
First we specify the parameters used in \textcolor{blue}{\eqref{paramater}}: $a = 1,\tau  = 6, R = \tau a = 6,m= 10, \lambda_{1,1} = 0.5,\nu = 0.1 \pi$ and thus $k^* = \pi/30$. The other wavenumbers are taken as $k=\pi$, $k=60.5\pi$ and $k=84.5\pi$. We conducted experiments to confirm the accuracy and stability of the phase retrieval formulas. Table \textcolor{blue}{\ref{tab_Eerr}} and \textcolor{blue}{\ref{tab_varerr}} show the relative errors of phase retrieval for $\mathbf{E}(u)$ and $\mathbf{Var}(u)$ with different wavenumbers and noise levels. The discrete relative errors are calculated as follows:
$$\mathbf{E}(u)_{err} = \frac{\left(\sum_{j=1}^m\sum_{n=1}^{N_j}|\mathbf{E}(u(x_n,k))-\mathbf{E}^\epsilon(u(x_n,k))|^2\right)^{1/2}}{\left(\sum_{j=1}^m\sum_{n=1}^{N_j}\left|\mathbf{E}(u(x_n,k))\right|^2\right)^{1/2}},$$
$$\mathbf{Var}(u)_{err} = \frac{\left(\sum_{j=1}^m\sum_{n=1}^{N_j}|\mathbf{Var}(u(x_n,k))-\mathbf{Var}^\epsilon(u(x_n,k))|^2\right)^{1/2}}{\left(\sum_{j=1}^m\sum_{n=1}^{N_j}\left|\mathbf{Var}(u(x_n,k))\right|^2\right)^{1/2}},$$
$$\mathbf{Cov}(\operatorname{Re}(u),\operatorname{Im}(u))_{err} = \frac{\left(\sum_{j=1}^m\sum_{n=1}^{N_j}|\mathbf{Cov}(\operatorname{Re}(u(x_n,k)),\operatorname{Im}(u(x_n,k))-\mathbf{Cov}^\epsilon(\operatorname{Re}(u(x_n,k)),\operatorname{Im}(u(x_n,k))|^2\right)^{1/2}}{\left(\sum_{j=1}^m\sum_{n=1}^{N_j}\left|\mathbf{Cov}(\operatorname{Re}(u(x_n,k)),\operatorname{Im}(u(x_n,k))\right|^2\right)^{1/2}},$$
here, $x_n \in \partial B_j$ $(j=1,\dots,m)$, $N_j$ denotes the number of measurements on $\partial B_j$, $\mathbf{E}(u)$ and $\mathbf{E}^\epsilon(u)$ represent the exact and retrieved means of the radiated fields, $\mathbf{Var}(u)$ and $\mathbf{Var}^\epsilon(u)$ represent the exact and retrieved variances, $\mathbf{Cov}(\operatorname{Re}(u),\operatorname{Im}(u))$ and $\mathbf{Cov}^\epsilon(\operatorname{Re}(u),\operatorname{Im}(u))$ represent the exact and retrieved covariance.
The specifications are as follows: $N_j=120,j=1,\cdots,m.$ $N_{y_1}=60, N_{y_2}=60, \Delta t = 0.0022, M = 5\times10^4.$ Here, $M$ denotes the total number of samples in the pCN algorithm.

\begin{table}[H]
	\centering
	\caption{The relative $L^2$
		errors between $\mathbf{E}(u(\cdot,k))|_{\partial B_{R}}$ and $\mathbf{E}^\epsilon(
		u(\cdot,k))|_{\partial B_{R}}$ for different
		wavenumbers and noise levels}
	\begin{tabular}{ccc c c c c}
		\toprule
		& {$k^{*}=\frac{\pi}{30}$} & {$k=\pi$} &{$k=60.5\pi$}& {$k=84.5\pi$} \\
		\midrule
		$\epsilon = 0.01$ & 0.2034 & 0.1272 & 0.6575& 0.8143 \\
		$\epsilon = 0.005$ & 0.1014 & 0.0686 & 0.3585& 0.4411 \\
		$\epsilon = 0.001$ & 0.0247 & 0.0313 & 0.1872& 0.2287\\
		\bottomrule
	\end{tabular}
	\label{tab_Eerr}
\end{table}
\begin{table}[H]
\centering
  \caption{The relative $L^2$
 errors between $\mathbf{Var}(u(\cdot,k))|_{\partial B_{R}}$ and $\mathbf{Var}^\epsilon(
u(\cdot,k))|_{\partial B_{R}}$ for different
wavenumbers and noise levels}
    \begin{tabular}{ccc c c c}
    \toprule
    & {$k^{*}=\frac{\pi}{30}$} & {$k=\pi$} & {$k=60.5\pi$}&{$k=84.5\pi$} \\
    \midrule
    $\epsilon = 0.01$ & 0.0265 & 0.0122 &0.0140& 0.0141 \\
    $\epsilon = 0.005$ & 0.0135 & 0.0092 & 0.0113&0.0113 \\
    $\epsilon = 0.001$ & 0.0028 & 0.0082 & 0.0105& 0.0107\\
    \bottomrule
    \end{tabular}
    \label{tab_varerr}
\end{table}

\begin{table}[H]
\centering
  \caption{The relative $L^2$
 errors between $\mathbf{Cov}(\operatorname{Re}(u_{j,k}),\operatorname{Im}(u_{j,k}))|_{\partial B_{R}}$ and $\mathbf{Cov}^\epsilon(\operatorname{Re}(u_{j,k}),\operatorname{Im}(u_{j,k}))|_{\partial B_{R}}$ for different
wavenumbers and noise levels}
    \begin{tabular}{ccc c c c}
    \toprule
    & {$k^{*}=\frac{\pi}{30}$} & {$k=\pi$} & {$k=60.5\pi$}&{$k=84.5\pi$} \\
    \midrule
    $\epsilon = 0.01$ & 0.0484 & 0.0620 &0.3256& 0.2438 \\
    $\epsilon = 0.005$ & 0.0247 & 0.0482 & 0.2352&0.1696 \\
    $\epsilon = 0.001$ & 0.0049 & 0.0429 & 0.1870& 0.1363\\
    \bottomrule
    \end{tabular}
    \label{tab_coverr}
\end{table}
From Table \textcolor{blue}{\ref{tab_Eerr}},
\textcolor{blue}{\ref{tab_varerr}} and  \textcolor{blue}{\ref{tab_coverr}}, the PR algorithm can accurately recover the phase information at different wavenumbers in small noise level. As we expect, the relative errors gradually decrease as the noise level decreases when the wavenumber is fixed. The conclusions of Theorems \textcolor{blue}{\ref{Esta}} and \textcolor{blue}{\ref{var_stab}} can be verified by these results. This demonstrates the effectiveness and feasibility of PR algorithm.
\subsection{Inverse random source}
In this section, we present the numerical results of reconstructing the random source using the Bayesian method.

We can reconstruct the mean $g_1$ via the equations \textcolor{blue}{\eqref{err_forward_f}} for a set of wavenumbers $k \in \{k_1,k_2,\cdots,k_N\}$. Similarly, the variance $\sigma_1$ can be reconstructed using the equations \textcolor{blue}{\eqref{err_forward_F}} for the same set of wavenumbers. In the following examples, we take $k_j = (j+82.5)\pi,j = 0, 1, 2, 3$.
We can simply discretize equation \textcolor{blue}{\eqref{err_forward_f}} and \textcolor{blue}{\eqref{err_forward_F}} to obtain a discrete linear system by using Trapezoidal rules on a uniform grid.

Specifically, we choose the covariance operator $\mathcal{C}_{0}$ of the Gaussian prior $\mathcal{N}(0,\mathcal{C}_{0})$ is given by
\begin{equation}\label{c_0}
	c_{0}(x_1,x_2)=\gamma\exp\left[-\frac{1}{2}\left(\frac{|x_1-x_2|}{d}\right)^2\right],
\end{equation}
where $\gamma=0.001$ and $d=0.2$ in the subsequent numerical experiment of reconstructing $g_1$. Additionally, we fix $\beta = 0.05$ and draw $10 ^ 5 $ samples from the posterior distribution using pCN algorithm, with a burn-in period of $ 5 \times 10 ^ 3 $.
The error of the numerical experiments is measured using relative $L^2$ error
$$g_{1_{err}} = \frac{\sqrt{\sum_{i=1}^{N^{'}}|\hat{g}_{1_{i}}-g_{1_{i}}|^2}}{\sqrt{\sum_{i=1}^{N^{'}}|g_{1_{i}}|^2}}.$$
where $g_1$ is the true solution and $\hat g_1$ is the reconstructed result obtained using our algorithm. When solving inverse problems, we employ a coarse grid to avoid the ``inverse crimes''
, i.e., $N_j=60,\ j=1,\cdots,m$. $N_{y_1}=30$, $N_{y_2}=30.$

The numerical results for reconstructing the source term $g_1$ are shown in Figure \textcolor{blue}{\ref{figure_g1_82.5}}. For $N=4$, the inversion attains a satisfactory outcome. The inversion results using $f^\epsilon_{1,k}$ or $f^\epsilon_{2,k}$ are similar.
\begin{figure}[H]
	\centering
	\subfigure[]{
		\label{82.5RE5}
		\includegraphics[width=6cm]{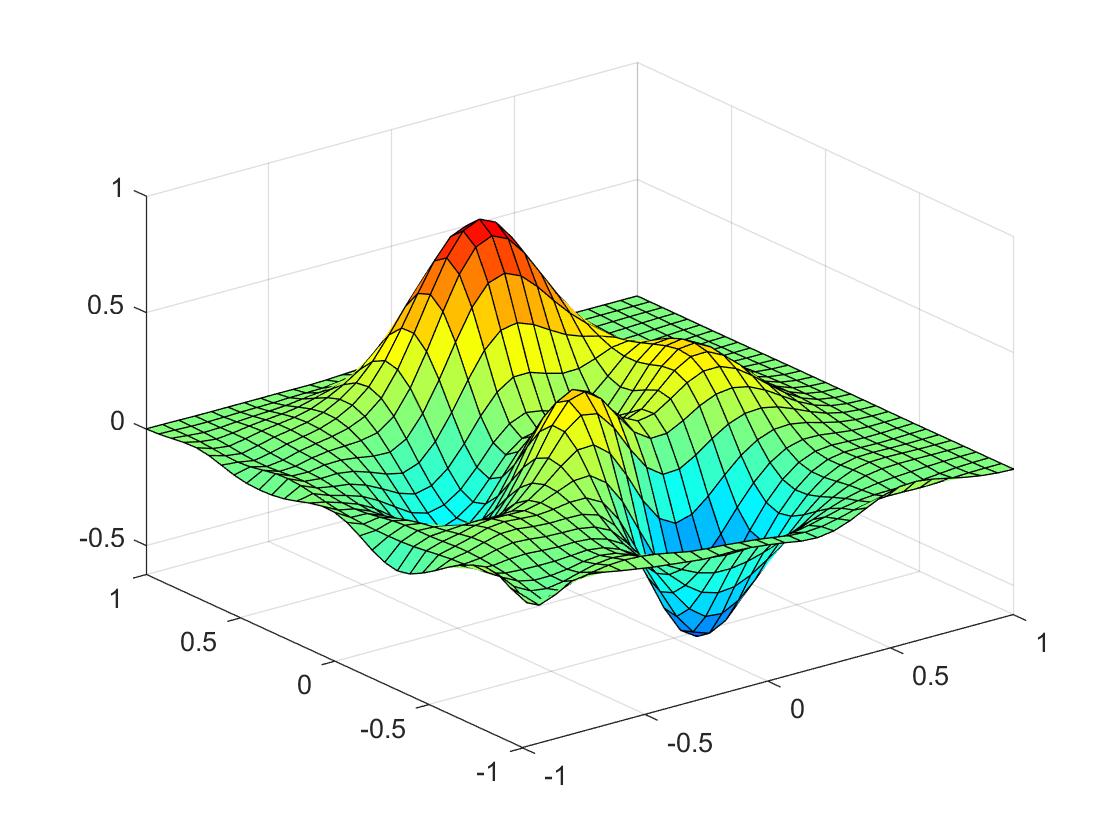}
	}
	\quad
	\subfigure[]{
		\label{82.5IM5}
		\includegraphics[width=6cm]{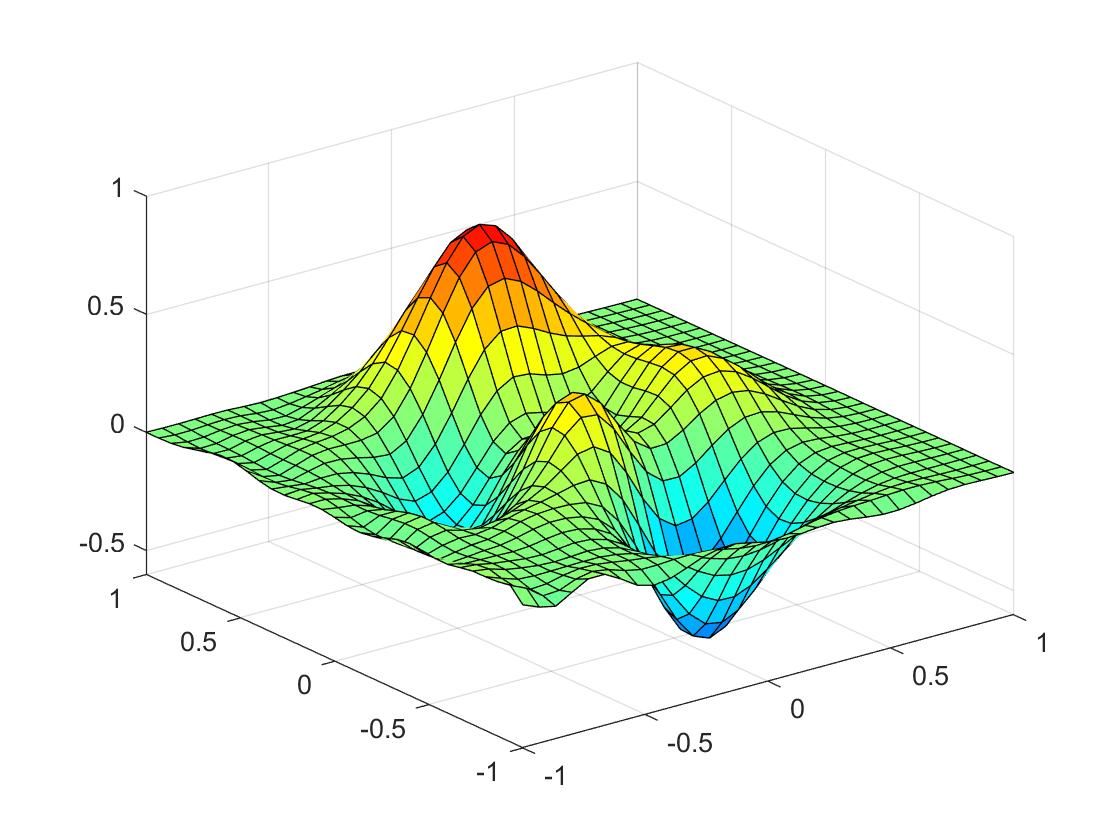}
	}
	\caption{Surface plots of the reconstructed mean $g_1$ using different equations with $\epsilon = 0.1\%$. (a) $f^\epsilon_{1,k}$. (b) $f^\epsilon_{2,k}$.}
	\label{figure_g1_82.5}
\end{figure}
The inversion process for $\sigma_1$ is the same as that for $g_1$.
When we use $F^\epsilon_{k,1,1}$ or $F^\epsilon_{k,2,2}$ to reconstruct $\sigma_1$, we take $\gamma=0.01,d = 0.2,\beta=0.01$ and draw $10 ^ 5 $ samples from pCN algorithm with $ 5 \times 10 ^ 3 $ in burn-in period. When we use $F^\epsilon_{k,1,2}$, we change paramater $\gamma$ to 0.05.
From Table \textcolor{blue}{\ref{tab_gserr}} and Figure \textcolor{blue}{\ref{figure_s1_82.5}}, we can accurately recover the true solution in all cases.

\begin{figure}[H]
	\centering
	\subfigure[]{
		\label{ff1}
		\includegraphics[width=0.3\textwidth]{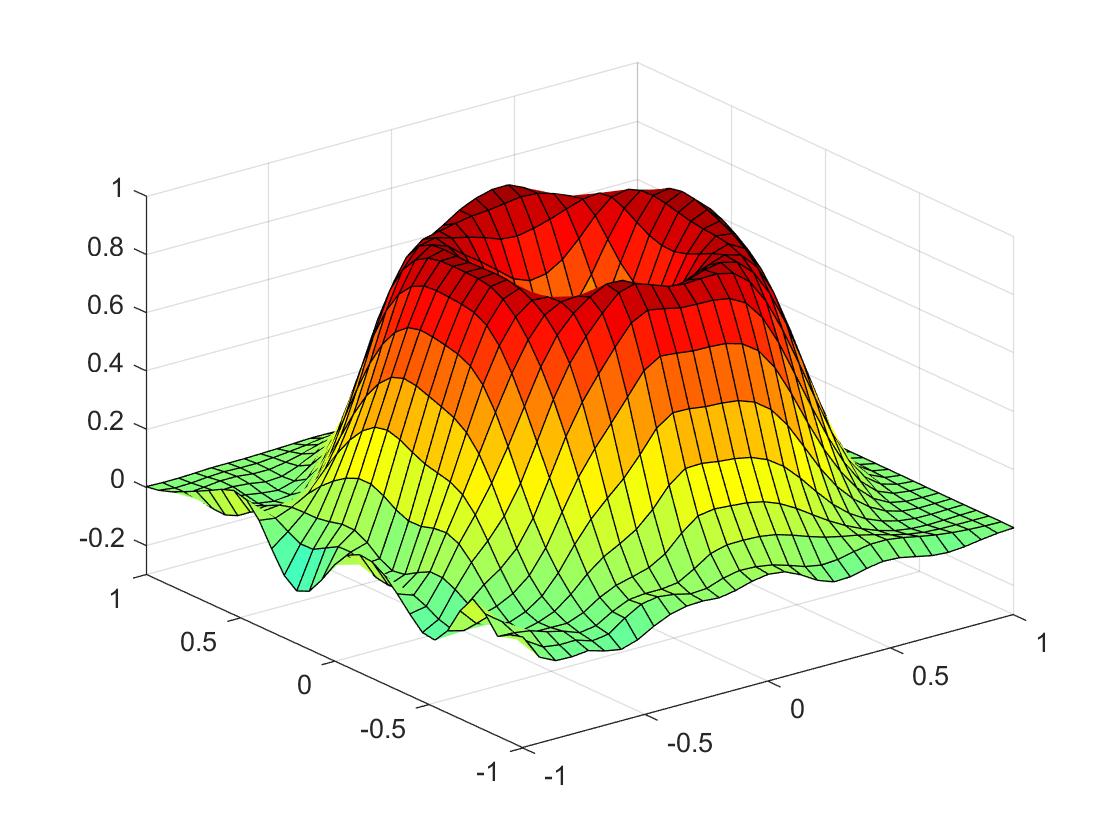}
	}
	\quad
	\subfigure[]{
		\label{ff2}
		\includegraphics[width=0.3\textwidth]{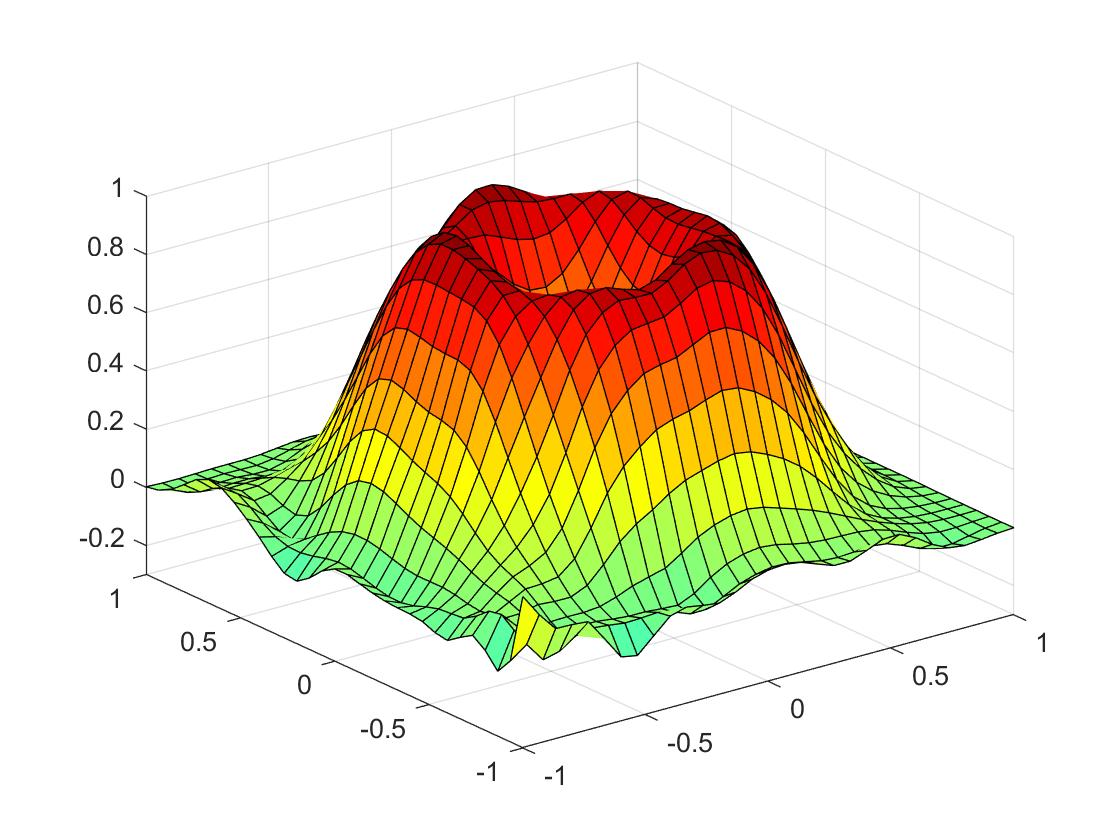}
	}
	\quad
	\subfigure[]{
		\label{ff3}
		\includegraphics[width=0.3\textwidth]{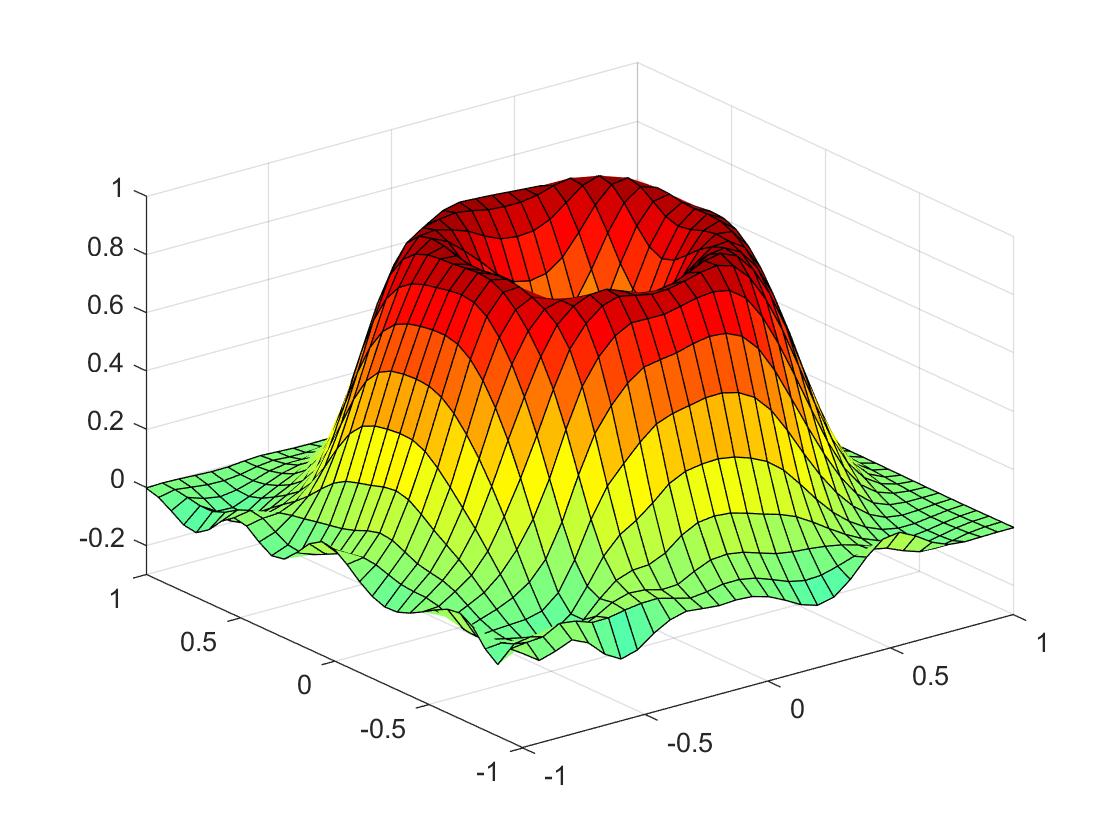}
	}
	\caption{Surface plots of the reconstructed mean $\sigma_1$ using different equations with $\epsilon = 0.1\%$. (a) $F^\epsilon_{k,1,1}$. (b) $F^\epsilon_{k,2,2}$.(c) $F^\epsilon_{k,1,2}$.}
	\label{figure_s1_82.5}
\end{figure}

\begin{table}[H]
	\centering
	\caption{The relative $L^2$ errors with various noise levels $\epsilon$.}
	\begin{tabular}{ccccc}
		\toprule
		& & $\epsilon=5\%$ & $\epsilon=1\%$ & $\epsilon=0.1\%$ \\
		\midrule
		$g_{1_{err}}$ & $f^\epsilon_{1,k}$ & 0.5954 & 0.1499 & 0.1211 \\
		& $f^\epsilon_{2,k}$ & 0.5773 & 0.1475 & 0.1151 \\
		\midrule
		$\sigma_{1_{err}}$ & $F^\epsilon_{k,1,1}$ & 0.3950 & 0.1102 & 0.1030 \\
		& $F^\epsilon_{k,2,2}$ & 0.3541 & 0.1183 & 0.1077 \\
		& $F^\epsilon_{k,1,2}$ & 0.1761 & 0.0832 & 0.0830 \\
		\bottomrule
	\end{tabular}
	\label{tab_gserr}
\end{table}

In Table \textcolor{blue}{\ref{tab_gserr}}, the reconstruction of $g_1$ becomes more accurate as the noise level decreases. Note that even with low levels of noise, the reconstruction remains quite precise and stable, regardless of whether $f^\epsilon_{1,k}$ or $f^\epsilon_{2,k}$ is used. However, when the noise level is high, the inversion results deteriorate significantly. When reconstructing $\sigma_1$, similar results are obtained.
\section{Conclusion}\label{6}
In this paper, we have investigated methods to determine acoustic random sources from multi-frequency phaseless data. The reconstruction process consists of two stages. On the first stage, phase retrieval formulas for the mean and variance of the scattered field are derived using the reference source technique, and the uniqueness and stability of the retrieved statistics are proved. On the second stage, based on the mild solution of the Helmholtz equation and phase retrieval formulas, we derive Fredholm integral equations for the inverse scattering problem to reconstruct the mean and variance of the random source. The Bayesian method is employed to solve the inverse problem for the reconstruction of random sources. The stability of the integral equations problems are proved, thereby leading to the stability results for the phaseless IRSP. A two-dimensional numerical example is provided to demonstrate the effectiveness and validity of the proposed method. The proposed framework and methodology can be directly applied to solving a wide range of IRSP with phaseless data, such as the heat and wave equations. Moreover, the methodology can be extended to random source problems in inhomogeneous media.

\section*{Acknowledgements}
\addcontentsline{toc}{section}{Acknowledgments}
The research of H. Liu was supported by NSFC/RGC Joint Research Scheme, N CityU101/21, ANR/RGC Joint Research Scheme, A-CityU203/19, and the Hong Kong RGC General Research Funds (projects 11311122, 11303125 and 11300821). The research of G. Zheng was supported by the NSF of China (12271151).

\section{Appendix}

\subsection{Appendix A}
\label{pf}

\begin{lem}\label{lem_Wx}
\text{Let $f(x)$ be a test function with a compact support in $\mathbb{R}^d.$ It holds that}\\
\begin{equation}\label{onefunc}
	\mathbf{E}\left(\int_{\mathbb{R}^d}f(x)\mathrm{d}{W}_x\right)=0,\quad\mathbf{E}\left(\left|\int_{\mathbb{R}^d}f(x)\mathrm{d}{W}_x\right|^2\right)=\int_{\mathbb{R}^d}|f(x)|^2\mathrm{d}x.
\end{equation}
\text{If $g(x)$ is also a fucntion with a compact support in $\mathbb{R}^d$, it holds that}\\
\begin{equation}\label{twofunc}
	\mathbf{E}\left(\int_{\mathbb{R}^d}f(x)\mathrm{d}{W}_x\cdot\int_{\mathbb{R}^d}g(x)\mathrm{d}{W}_x\right)=\int_{\mathbb{R}^d}f(x)\cdot g(x)\mathrm{d}x.
\end{equation}
\end{lem}
\begin{proof}
The proof of \textcolor{blue}{\eqref{onefunc}} can be found in \textcolor{blue}{\cite{Bao2016}}.
Now, we prove the third equation.
According to \textcolor{blue}{\cite{Bao2016}}, we have
\begin{equation}\label{A.1}
	\mathbf{E}({W}_x{W}_y)=x\wedge y:=(x_1\wedge y_1)\cdots(x_d\wedge y_d)
\end{equation}
for any $x=(x_1,\ldots,x_d)\in\mathbb{R}_+^d\mathrm{~and~}y=(y_1,\ldots,y_d)\in\mathbb{R}_+^d,\mathrm{~where~}x_j\wedge y_j=\min\{x_j,y_j\}.$
We also have
\begin{equation}\label{A.2}
	\int_{\mathbb{R}^{d}}f(x)\mathrm{d}{W}_{x} = (-1)^d\int_{\mathbb{R}^{d}}{W}_{x}\frac{\partial^{d}f(x)}{\partial x_{1}\cdots\partial x_{d}}\mathrm{d}x
\end{equation}
and
\begin{equation}\label{A.3}
	\quad\int_{\mathbb{R}^d}\frac{\partial^d f(x)}{\partial x_1\cdots\partial x_d}(x\wedge y)\mathrm{d}x=(-1)^d\int_{-\infty}^{y_1}\cdots\int_{-\infty}^{y_d}f(x)\mathrm{d}x_d\cdots\mathrm{d}x_1.
\end{equation}
From \textcolor{blue}{\eqref{A.1}}, \textcolor{blue}{\eqref{A.2}} and \textcolor{blue}{\eqref{A.3}}, it follows that
\begin{equation}\label{A.4}
	\begin{aligned}
		&\mathbf{E}\left(\int_{\mathbb{R}^{d}}f(x)\mathrm{d}{W}_{x}\cdot\int_{\mathbb{R}^{d}}g(y)\mathrm{d}{W}_{y}\right)\\
		=&\mathbf{E}\left(\int_{\mathbb{R}^{d}}{W}_{x}\frac{\partial^{d}f(x)}{\partial x_{1}\cdots\partial x_{d}}\mathrm{d}x\cdot\int_{\mathbb{R}^{d}}{W}_{y}\frac{\partial^{d}g(y)}{\partial y_{1}\cdots\partial y_{d}}\mathrm{d}y\right) \\
		=&\int_{\mathbb{R}^{d}}\int_{\mathbb{R}^{d}}\frac{\partial^{d}f(x)}{\partial x_{1}\cdots\partial x_{d}}\frac{\partial^{d}g(y)}{\partial y_{1}\cdots\partial y_{d}}\mathbf{E}({W}_{x}{W}_{y})\mathrm{d}x\mathrm{d}y \\
		=&\int_{\mathbb{R}^{d}}\frac{\partial^{d}g(y)}{\partial y_{1}\cdots\partial y_{d}}\left(\int_{\mathbb{R}^{d}}\frac{\partial^{d}f(x)}{\partial x_{1}\cdots\partial x_{d}}(x\wedge y)\mathrm{d}x\right)\mathrm{d}y\\
		=&\int_{\mathbb{R}^{d}}\frac{\partial^{d}g(y)}{\partial y_{1}\cdots\partial y_{d}}\left((-1)^d\int_{-\infty}^{y_1}\cdots\int_{-\infty}^{y_d}f(x)\mathrm{d}x_d\cdots\mathrm{d}x_1\right)\mathrm{d}y\\
		=&(-1)^d\int_{-\infty}^{+\infty}\cdots\int_{-\infty}^{+\infty}\frac{\partial^{d}g(y)}{\partial y_{1}\cdots\partial y_{d}}\left(\int_{-\infty}^{y_1}\cdots\int_{-\infty}^{y_d}f(x)\mathrm{d}x_d\cdots\mathrm{d}x_1\right)\mathrm{d}y.
	\end{aligned}
\end{equation}

First, apply integration by parts to $y_1$. Since $g(y)$ has a compact support in $\mathbb{R}^d$, we have
$$\begin{aligned}&\int_{-\infty}^{+\infty}\left[\frac{\partial}{\partial y_{1}}(\frac{\partial^{d-1}g(y_{1},y_{2},\cdots y_{d})}{\partial y_{2}\cdots\partial y_{d}})\int_{-\infty}^{y_{1}}f(x)\mathrm{d}x_{1}\right]\mathrm{d}y_{1}\\=&\frac{\partial^{d-1}g(y_{1},y_{2}\cdots y_{d})}{\partial y_{2}\cdots\partial y_{d}}\cdot\int_{-\infty}^{y_{1}}f(x)\mathrm{d}x_{1}|_{-\infty}^{+\infty}-\int_{-\infty}^{+\infty}\frac{\partial^{d-1}g(y_{1},\cdots y_{d})}{\partial y_{2}\cdots\partial y_{d}}\cdot f(y_{1},x_{2},\cdots,x_{d})\mathrm{d}y_{1}\\
	=&-\int_{-\infty}^{+\infty}\frac{\partial^{d-1}g(y_{1},\cdots y_{d})}{\partial y_{2}\cdots\partial y_{d}}\cdot f(y_{1},x_{2},\cdots,x_{d})\mathrm{d}y_{1}.
\end{aligned}$$
Then for $y_2$, we also obtain
$$\begin{aligned}&\int_{-\infty}^{+\infty}\frac{\partial^{d-1}g(y_{1},\cdots y_{d})}{\partial y_{2}\cdots\partial y_{d}}\left(\int_{-\infty}^{y_{2}}f(y_{1},x_{2},\cdots x_{d})\mathrm{d}x_{2}\right)\mathrm{d}y_{2}\\=&\int_{-\infty}^{+\infty}\frac{\partial}{\partial y_{2}}\left(\frac{\partial^{d-2}(y_{1},\cdots y_{d})}{\partial y_{3}\cdots\partial y_{d}}\right)\left(\int_{-\infty}^{y_{2}}f(y_{1},x_{2},\cdots x_{d})\mathrm{d}x_{2}\right)\mathrm{d}y_{2}\\=&\frac{\partial^{d-2}g(y_{1},y_{2}\cdots y_{d})}{\partial y_{3}\cdots\partial y_{d}}\int_{-\infty}^{y_{2}}f(y_{1},x_{2},\cdots x_{d})\mathrm{d}x_{2}|_{-\infty}^{+\infty}-\int_{-\infty}^{+\infty}\frac{\partial^{d-2}g(y_{1},y_{2},\cdots,y_{d})}{\partial y_{3}\cdots\partial y_{d}}\cdot f(y_{1},y_{2},x_{2}\cdots x_{d})\mathrm{d}y_{2}\\=&-\int_{-\infty}^{+\infty}\frac{\partial^{d-2}g(y_{1},y_{2},\cdots y_{d})}{\partial y_{2}\cdots\partial y_{d}}f(y_{1},y_{2},x_{3}\cdots x_{d})\mathrm{d}y_{2}.\end{aligned}$$
Repeat the process for $y_3,\cdots,y_d$, hence
\begin{equation}
	\begin{aligned}
		\mathbf{E}\left(\int_{\mathbb{R}^{d}}f(x)\mathrm{d}{W}_{x}\cdot\int_{\mathbb{R}^{d}}g(y)\mathrm{d}{W}_{y}\right)&= (-1)^d\cdot(-1)^d\int_{-\infty}^{+\infty}\cdots\int_{-\infty}^{+\infty}g(y_1,\cdots,y_d)f(y_1,\cdots,y_d)\mathrm{d}y_1\cdots \mathrm{d}y_d\\&=
		\int_{\mathbb{R}^{d}}f(x)g(x)\mathrm{d}x,
	\end{aligned}
\end{equation}
which completes the proof.
\end{proof}

\addcontentsline{toc}{section}{References}

\end{document}